\documentclass[preprint]{elsarticle}

\usepackage{graphicx}
\usepackage{amsmath}
\usepackage{shortcuts}
\usepackage{amssymb}
\usepackage{amsthm}
\usepackage{multirow}

\newtheorem{thm}{Theorem}
\newtheorem{lemma}{Lemma}
\newtheorem{prop}{Proposition}

\newtheorem*{mrthm}{Theorem (Morales-Ramis)}
\newtheorem*{kthm}{Theorem (Kovacic)}

\begin{document}

\begin{frontmatter}
\title{Regular and irregular geodesics on spherical harmonic surfaces}
\author{Thomas J. Waters}
\address{Department of Mathematics, University of Portsmouth,\\ Portsmouth PO1 3HF, United Kingdom}
\begin{abstract}
  The behavior of geodesic curves on even seemingly simple surfaces can be surprisingly complex.
  In this paper we use the Hamiltonian formulation of the geodesic equations to analyze their
  integrability properties. In particular, we examine the behavior of geodesics on surfaces defined
  by the spherical harmonics. Using the Morales-Ramis theorem and Kovacic algorithm we are able to
  prove that the geodesic equations on all surfaces defined by the sectoral harmonics are not
  integrable, and we use Poincar\'{e} sections to demonstrate the breakdown of regular motion.

\end{abstract}
\begin{keyword}
  geodesic \sep integrability \sep differential Galois theory \sep spherical
  harmonics \sep Kovacic algorithm \sep Morales-Ramis
\end{keyword}
\end{frontmatter}

%%%%%%%%%%%%%%%%%%%%%%%%%%%%%%%%%%%%%%%%%%%%%%%%%%%%%%%%%%%%%%%%%%%%%%%%%%
%%%%%%%%%%%%%%%%%%%%%%%%%%%%%%%%%%%%%%%%%%%%%%%%%%%%%%%%%%%%%%%%%%%%%%%%%%

\section{Introduction}

The study of the qualitative behavior of geodesics goes back to
the beginnings of differential geometry; in fact Clairault's
relation dates from the 1740's. One of the many interesting
features of the geodesic equations is their complexity: how can we
solve such difficult nonlinear differential equations, and indeed
what does it mean to `solve' a differential equation at all? In
the general development of Lagrangian/Hamiltonian dynamics the
issue of integrability came to the fore, and the use of first
integrals to solve a dynamical system via quadratures was
developed. By integrability here we mean intebrability in the
sense of Louiville: a Hamiltonian system with $n$ degrees of
freedom is Louiville integrable if there exist $n$ independent
first integrals of the motion in involution (vanishing pairwise
Poisson brackets). This formalism in turn lead to the consequences
of non-integrability, the onset of chaos and the celebrated KAM
theory. For a particularly interesting introduction to this topic
see Berry \cite{berry}, McCauley \cite{mccauley} or Jos\'{e} and
Saletan \cite{jose}.

The integrability of the geodesic equations on some surfaces, and
therefore the regularity of the geodesic trajectories, has been
well understood for many years. Clairault's relation
\cite{docarmo} provides a second integral for surfaces of
revolution, and Jacobi \cite{kling} integrated the triaxial
ellipsoid. These results have been extended to $n$-dimensional
ellipsoids and to quadrics in general (see Audin \cite{audin}),
where interestingly degenerate cases can prove more difficult to
analyse (see Davison et al \cite{davison}). Recently, the
integrability of the geodesic equations has been demonstrated for
more abstract manifolds, defined in terms of Lie groups (see for
example Thimm \cite{thimm}). Aside from being interesting in their
own right, the geodesics of a manifold also have some useful
applications. For example, Maupertius's principle \cite{bols}
relates the dynamics on an isoenergetic level of any mechanical
system to the geodesics of a manifold, and the relativistic
formulation of gravity dictates that massive particles move along
the timelike geodesics of spacetime \cite{wald}.

There are many powerful results regarding the non-integrability of
the geodesic flow\footnote{Technically, the geodesic flow is
defined on the cotangent bundle whose projection onto the manifold
gives the geodesic curves, however we will use these two tems
interchangeably.} on manifolds of a certain class, for example a
theorem of Kolokol'tsov \cite{kolok} says that two-dimensional
compact surfaces of genus $g>1$ do not admit geodesic flows that
are integrable with a second integral polynomial with respect to
the momenta (see Bolsinov \cite{bols2} for a review and
references). What's more the integrable geodesic flows on surfaces
with $g=0,1$ with second integrals either linear or quadratic in
the momenta are completely described. Nonetheless, new examples of
integrable geodesic flows with second integrals cubic in the
momenta have recently been found (see for example Dullin and
Matveev \cite{dullin}) and the possibility of non-polynomial, and
in particular meromorphic second integrals, is open.

A very significant recent development with regards integrability
has been the theorem of Morales Ruiz and Ramis
\cite{mrthm1,mrthm2}. This theorem provides a link between the
question of whether a nonlinear Hamiltonian system is {\it
integrable} (with meromorphic first integrals) with whether the
variational equations of a non-trivial solution to the Hamiltonian
system are {\it solvable}. This is a great advance as the
variational equations are linear with variable coefficients, and
the solvability of equations of this type may be addressed using
differential Galois theory. This approach has proved very
successful (we give here only some examples): in Morales Ruiz and
Ramis \cite{mrthm2} the authors use this formalism to prove the
non-integrability of certain problems from mechanics and celestial
mechanics; in Morales Ruiz et al \cite{simon} the non
integrability of Hill's problem is shown; in Pujol et al
\cite{sam} the non-integrability of the swinging Atwood's machine
is demonstrated; in Acosta-Hum\'{a}nez et al \cite{humanez} some
problems in celestial mechanics are studied and in Bardin et al
\cite{bardin} the generalized Jacobi problem is studied. In the
majority of these papers, the solvability of the variational
equations is determined using Kovacic's algorithm, and this is the
approach we will follow in this work, with a description of the
algorithm and relevant references given in Section 4.

To the best of our knowledge this powerful Morales-Ramis formalism
has not been applied to the study of geodesics on manifolds and
their integrability properties, and the approach taken in this
work is novel in this respect. In the following we will examine
the properties of geodesics on surfaces defined in terms of the
spherical harmonics, and we shall rigorously prove using the
Morales-Ramis theorem the non-integrability of the geodesic
equations on surfaces defined in terms of sectoral spherical
harmonics. Thus we present a two-parameter family of surfaces
which appear very simple and symmetrical and yet have complex and
indeed chaotic geodesic curves on them; this is a novel addition
as the surfaces for which non-integrability of the geodesic flow
has been rigorously proved is small. Finally, we will also use
Poincar\'{e} sections to display the breakdown of regular motion
as well as to understand the various families of closed geodesics
on the surfaces; again this approach is novel in the case of
geodesic flows.

The layout of the paper shall be as follows: In Section 2 we
review the formulation of the geodesic equations and describe the
classes of surfaces which will be examined. In Section 3 we use
Poincar\'{e} sections to exhibit the breakdown in regular motion
suggesting non-integrability, as well as describe the main
families of closed geodesics on the surface. Finally in Section 4
we will review some necessary theory and prove the main result:
the non-integrability of the geodesic equations on sectoral
harmonic surfaces with $n>1$.

%%%%%%%%%%%%%%%%%%%%%%%%%%%%%%%%%%%%%%%%%%%%%%%%%%%%%%%%%%%%%%%%%%%%%%%%%%
%%%%%%%%%%%%%%%%%%%%%%%%%%%%%%%%%%%%%%%%%%%%%%%%%%%%%%%%%%%%%%%%%%%%%%%%%%

\section{The geodesic equations}

One geometrical description of a geodesic curve is as follows: of
all the curves connecting two nearby points on a manifold, the one
that extremizes the length along that curve is the geodesic
(minimizes in the case of a positive definite manifold). An
alternative dynamical description is to imagine a test particle
moving in the manifold free from external forces; if the
particle's speed is constant, then the path it follows is a
geodesic. Consequently, we may define a (dynamic) Lagrangian
$\mathcal{L}$ in the following way: let the coordinates $x^a$
parameterize the $n$-dimensional manifold $(\mathcal{M},\bmt{g})$
and let $s$ parameterize a curve in the manifold, such that
$x^a=x^a(s)$. The length of the tangent vector to this curve is
\bes \left(\sum_{a,b=1}^{n} g_{ab}\ \dot{x}^a\,
\dot{x}^b\right)^{1/2}, \ees but this is simply the speed. Here
$g_{ab}$ are the components of the metric tensor in the $x^a$
coordinate system, and from now on a dot denotes differentiation
w.r.t.\ $s$ and summation over repeated indices is implied. We may
define the kinetic energy as one half the square of this quantity
(assuming unit mass), and therefore the Lagrangian function is \be
\mathcal{L}=\tfrac{1}{2} \ g_{ab}\ \dot{x}^a\, \dot{x}^b. \ee
Putting the Lagrangian function into the Euler-Lagrange equations
we arrive at the geodesic equations, \be
\der{^2x^a}{s^2}+\Gamma^a_{bc}\ \der{x^b}{s}\der{x^c}{s}=0,
\label{geoeqn} \ee where the Christoffel symbols (metric
connections) are given by \be
\Gamma^a_{bc}=\tfrac{1}{2}g^{ad}(g_{bd,c}+g_{cd,b}-g_{bc,d}).
\label{chris}\ee The equivalent Hamiltonian formulation is clear,
by simply letting $\mathcal{H}=\mathcal{L}$. For a more detailed
derivation of the geodesic equations see Paternain
\cite{paternain}, Do Carmo \cite{docarmo} or alternatively Wald
\cite{wald}. We note that some authors prefer to use the
coordinates of the space the manifold is embedded in, and place a
constraint on these coordinates (see for example Davison et al
\cite{davison}). This approach does not suit our purposes here.

\subsection{Surfaces defined in polar form}

Let us consider two-dimensional surfaces given in polar form, that
is \be r=r(\theta,\phi), \ee where $(\theta,\phi)$ are the
standard spherical coordinates with $0<\theta<\pi,\ 0<\phi< 2\pi$.
We may calculate the geodesic equations in the following way: the
line element for $\mathbb{R}^3$ in spherical coordinates is given
by \bes ds^2=dr^2+r^2d\theta^2+r^2\sin^2\theta d\phi^2. \ees On
the surface we have $r=r(\theta,\phi)$ and hence
$dr=r_{,\theta}d\theta+r_{,\phi}d\phi$ and so \be
ds^2=(r_{,\theta}^2+r^2)d\theta^2+2r_{,\theta}r_{,\phi}d\theta
d\phi+(r_{,\phi}^2+r^2\sin^2\theta)d\phi^2, \label{legenr} \ee
from which the metric coefficients may be read off, inserted into
the Christoffel symbols in \eqref{chris} which in turn are put
into the geodesic equations in \eqref{geoeqn}, namely
\begin{align}
\ddot{\theta}+\Gamma^{\theta}_{\theta\theta}\dot{\theta}^2+2\Gamma^{\theta}_{\theta\phi}\dot{\theta}\dot{\phi}
      +\Gamma^{\theta}_{\phi\phi}\dot{\phi}^2=0, \label{geoth} \\ \ddot{\phi}+\Gamma^{\phi}_{\theta\theta}\dot{\theta}^2+2\Gamma^{\phi}_{\theta\phi}\dot{\theta}\dot{\phi}
      +\Gamma^{\phi}_{\phi\phi}\dot{\phi}^2=0. \label{geoph} \end{align}

It will be most informative for later analysis to describe closed
geodesics on these surfaces. For this, we note the following: if a
smooth surface has a plane of symmetry, then the curve formed by
the intersection of the plane of symmetry with the surface will be
a geodesic. Geometrically the reason for this is clear: if the
surface is smooth the plane of symmetry will be normal to the
surface, and any normal curve is geodesic. For the sake of the
analysis to follow however we will make the following argument:
let us arrange the coordinate system so that the plane of symmetry
is given by $\theta=\pi/2$; on this plane $r_{,\theta}=0$. From
\eqref{geoth}, we see $\theta=\pi/2,\dot{\theta}=0$ will be a
solution if $\Gamma^\theta_{\phi\phi}(\theta=\pi/2)=0$. Direct
calculation from \eqref{chris} and \eqref{legenr} gives \[
\Gamma^\theta_{\phi\phi}=\frac{r_{,\theta}(r r_{,\phi\phi}
\sin^2\theta -2r_{,\phi}^2\sin^2\theta-r^2\sin^4\theta
)-\cos\theta(r^3\sin^3\theta+r r_{,\phi}^2\sin\theta
)}{r(r^2\sin^2\theta+r_{,\theta}^2\sin^2\theta+r_{,\phi}^2)},
\] which vanishes on $\theta=\pi/2$. This is equivalent to the `invariant plane' construction typical in dynamics which
we will describe more of below;
  we will subsequently refer to such geodesics as {\it planar} geodesics.\\
      Of course, to describe the geodesic completely we must solve the geodesic equations reduced to this plane, namely
      $\ddot{\phi}+\Gamma^{\phi}_{\phi\phi}(\theta=\pi/2)\dot{\phi}^2=0$;
      we will address this issue in Section 4.

\subsection{Surfaces defined by the spherical harmonics}

The surfaces we will examine in this work are those defined by the
spherical harmonics in the following polar form \be
r(\theta,\phi)=1+\vep\, Y_{lm}(\theta,\phi). \ee Here $\vep$ is a
small parameter and $Y_{lm}$ is the spherical harmonic of degree
$l$ and order $m$ (see Wang and Guo \cite{wang}) given by \[
Y_{lm}(\theta,\phi)=\sqrt{\frac{(2l+1)(l-m)!}{4\pi(l+m)!}}\
P^m_l(\cos\theta) e^{im\phi},\quad m=0,\pm1,\ldots,\pm l, \] where
$P^m_l$ are the associated Legendre polynomials. We may absorb the
orthonormality coefficient into $\vep$ and take the real part of
the exponential to write \be r(\theta,\phi)=1+\vep\,
P^m_l(\cos\theta) \cos(m\phi), \quad 0\leq \vep<1. \label{genpol}
\ee With a slight abuse of language, we shall refer to surfaces
defined in terms of the spherical harmonic functions as `spherical
harmonic surfaces' and so on. We chose to study these surfaces for
the following reasons:
\begin{enumerate}
  \item The spherical harmonics are complete, that is any function
  continuous over the sphere may be decomposed into a (possibly
  infinite) set of spherical harmonics. As such, general
  surfaces of a certain class can be decomposed into a number of
  spherical harmonic surfaces.
  \item The spherical harmonics naturally have the smoothness and
  regularity required for this analysis, and are non-singular as long as
  $\vep<1$.
  \item The small parameter allows for a perturbative approach: with
  $\vep=0$ the surface is a sphere, and as we let $\vep$ grow we
  are deforming the surface away from the sphere.
  \item The spherical harmonic surfaces are compact, enabling the
  Poincar\'{e} section analysis of the next section.
\end{enumerate}

There are three classes of spherical harmonic: the zonal, the
sectoral and the tesseral (see Figure \ref{harmex} for
representative examples).
\begin{description}
  \item[Zonal] These are the spherical harmonics with $m=0$, and thus $r=1+\vep\, P_l^0(\cos\theta)$. Since $r_{,\phi}=0$,
  this means $\partial \mathcal{L}/\partial \phi=0$, which from the Euler- Lagrange equations leads to
  $\partial\mathcal{L}/\partial\dot{\phi}=$constant. This is of course Clairault's integral, as the zonal harmonic surfaces are surfaces
  of revolution; the motion of the geodesics on zonal harmonic surfaces will therefore be regular.
  \item[Sectoral] These are the spherical harmonics for which $l=m$. We shall relabel the degree/order $n$ in this case, and $Y^n_n$ takes
   on a simple form: the sectoral harmonic surfaces are defined by \be r=1+\vep\, \sin^n(\theta)\cos(n\phi). \label{secpol} \ee This describes a two-parameter family of surfaces which admit planes of symmetry; as such they provide an interesting case for analysis and will form the main part of this work.
  \item[Tesseral] These are the remaining members of the family, with $m=\pm 1,\ldots$,$\pm l-1$. We will not examine these surfaces in any
  detail in this work, but discuss them briefly in the
  conclusions.
\end{description}

\begin{center}
\begin{figure}
\includegraphics[width=0.95\textwidth]{}
\caption{Examples of
spherical harmonic surfaces, for various values of $\vep$: on the
left the $l=2,m=0$ zonal harmonic, in the centre the $l=m=4$
sectoral harmonic, and on the right the $l=7,m=4$ tesseral
harmonic.} \label{harmex}
\end{figure}
\end{center}

\vspace{-0.7cm} The Hamiltonian function for the geodesic
equations on the sectoral harmonic surfaces is given by \be
2\ham=g_{\theta\theta}
\dot{\theta}^2+2g_{\theta\phi}\dot{\theta}\dot{\phi}+g_{\phi\phi}\dot{\phi}^2
\ee where \begin{align*}
  &g_{\theta\theta}=(1+\vep\cos(n\phi)\sin^n(\theta))^2+\vep^2n^2\cos^2(n\phi)\sin^{2n-2}(\theta), \\
  &g_{\theta\phi}=g_{\phi\theta}=-\vep^2n^2\cos(n\phi)\sin(n\phi)\cos(\theta)\sin^{2n-1}(\theta), \\
  &g_{\phi\phi}=\sin^2(\theta)(1+\vep\cos(n\phi)\sin^n(\theta))^2+\vep^2n^2\sin^2(n\phi)\sin^{2n}(\theta).
\end{align*}

It is clear from \eqref{secpol}, or indeed Figure \ref{harmex},
that the $n$th sectoral harmonic surface has $n+1$ planes of
symmetry: $n$ vertical/meridianal planes given by $\phi= \pi i/n,\
i=0,\ldots,n-1$ and 1 horizontal plane given by $\theta=\pi/2$,
the equator. Each of these planes defines a planar geodesic as
described above.

%Before going on to analyze the geodesics on the sectoral harmonic
%surfaces, let us pause for a moment and consider whether the
%geodesics should display regular or irregular motion. The sectoral
%harmonic surfaces do not possess any continuous symmetries, as the
%surfaces of revolution do, however they do possess a number of
%discrete or plane symmetries as described above. Thus the sectoral
%harmonic surfaces are more symmetric than triaxial ellipsoids,
%which as we know have integrable geodesic flow. Nonetheless, we
%shall see that the geodesic equations on the sectoral harmonic
%surfaces are in fact not integrable, and the geodesics perform
%irregular and aperiodic motions. Before going on to prove this
%fact rigorously, we shall use the method of Poincar\'{e} sections
%to get a feel for the qualitative behaviour of the geodesics.

%%%%%%%%%%%%%%%%%%%%%%%%%%%%%%%%%%%%%%%%%%%%%%%%%%%%%%%%%%%%%%%%%%%%%%%%%%
%%%%%%%%%%%%%%%%%%%%%%%%%%%%%%%%%%%%%%%%%%%%%%%%%%%%%%%%%%%%%%%%%%%%%%%%%%

\section{Poincar\'{e} Sections}

% theorem proving boundedness of section
% theorem proving n=1 is a s.o.r.

This is a much used technique to attempt to visualize the whole of
phase space in the case of a 2-degree of freedom Hamiltonian
system. By fixing a value of $\mathcal{H}=\mathcal{H}_0$, a
3-dimensional submanifold of phasespace, and intersecting this
manifold with a hyperplane (typically given by a coordinate taking
on a fixed value), we consider successive intersections of
solution trajectories with this hyperplane. If the Hamiltonian
system is integrable, then the second integral restricts the
solutions further to a 2-dimensional submanifold of phasespace,
whose intersections with the hyperplane will trace out a closed
curve. On the other hand, if the Hamiltonian system is not
integrable then successive intersections of the solution
trajectory with the hyperplane may instead fill a 2-dimensional
region.

Typically, for a fixed set of parameters a number of Poincar\'{e}
sections for various values of the `energy' $\mathcal{H}_0$ must
be drawn, for example in the H\'{e}non-Heiles system \cite{hh}.
This is not necessary in the current situation, since the
numerical value of the Hamiltonian function may be changed from
$\ham_0$ to $\ham_1$ by the change of independent variable $s\to
\sqrt{\ham_0/\ham_1}\, s$. Therefore the numerical value of $\ham$
signifies the {\it parameterization} of the geodesic curve, rather
than any `energy' concept. Indeed, any affine coordinate
transformation $s\to \alpha s+\beta$ will change the value of
$\ham$ but leaves \eqref{geoeqn} invariant. Therefore, any
qualitative change in the Poincar\'{e} sections will be due to
variations in the surface's parameters ($n$ and $\vep$ for the
sectoral harmonic surfaces), rather than the value of $\ham$. In
what follows, we will assume the geodesic is parameterized by
arc-length (sometimes referred to as `unit-speed curves' in the
literature), and therefore \[ \ham\equiv \tfrac{1}{2}. \]

In choosing the hyperplane to act as the Poincar\'{e} section we
must ensure all solutions intersect the hyperplane transversally;
no tangential grazings are allowed, and a guarantee that the
trajectory will intersect the surface again is preferable. Any of
the $n+1$ planes of symmetry would be suitable candidates,
although for simplicity we will use the equatorial plane (some
comments regarding the use of meridianal planes are given in
Appendix A). First we prove the following lemma:

\begin{lemma}
Along a geodesic, $\theta$ cannot obtain a maximum/minimum in the
northern/southern hemisphere respectively, at least for moderate
values of $\vep$.
\end{lemma}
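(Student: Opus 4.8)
The plan is to read the second derivative $\ddot{\theta}$ at a turning point straight off the geodesic equation \eqref{geoth}. At an extremum of $\theta$ we have $\dot{\theta}=0$, and the unit-speed normalization $\ham\equiv\tfrac12$ then forces $g_{\phi\phi}\dot{\phi}^2=1$, so $\dot{\phi}\neq0$ there. Setting $\dot{\theta}=0$ in \eqref{geoth} leaves $\ddot{\theta}=-\Gamma^{\theta}_{\phi\phi}\,\dot{\phi}^2$, so the turning point is a maximum when $\Gamma^{\theta}_{\phi\phi}>0$ and a minimum when $\Gamma^{\theta}_{\phi\phi}<0$. The lemma is therefore equivalent to showing that, for moderate $\vep$, the sign of $\Gamma^{\theta}_{\phi\phi}$ is opposite to that of $\cos\theta$: negative in the northern hemisphere ($\cos\theta>0$), which excludes maxima there, and positive in the southern hemisphere ($\cos\theta<0$), which excludes minima there.

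Next I would substitute the sectoral profile $r=1+\vep\sin^n(\theta)\cos(n\phi)$ into the explicit expression for $\Gamma^{\theta}_{\phi\phi}$ recorded in Section 2. Its denominator, $r(r^2\sin^2\theta+r_{,\theta}^2\sin^2\theta+r_{,\phi}^2)$, is a positive multiple of a sum of squares and is strictly positive for $\vep<1$, so the sign is carried entirely by the numerator. The key observation is that $r_{,\theta}=\vep n\sin^{n-1}(\theta)\cos\theta\cos(n\phi)$ contains an explicit factor $\cos\theta$ (it vanishes on the equator, as it must since $\theta=\pi/2$ is a plane of symmetry). Every term of the numerator therefore carries a factor $\cos\theta$, and pulling this out together with the overall $\sin\theta>0$ lets me write the numerator as $\sin\theta\,\cos\theta\,M$. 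Because the $\cos\theta$ is now explicit, the two hemisphere statements collapse into the single claim $M<0$.

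It then remains to verify $M<0$, where $M=\rho\sin\theta\,(r\,r_{,\phi\phi}-2r_{,\phi}^2-r^2\sin^2\theta)-(r^3\sin^2\theta+r\,r_{,\phi}^2)$ with $\rho=\vep n\sin^{n-1}(\theta)\cos(n\phi)$. The only contribution that survives at $\vep=0$ is $-r^3\sin^2\theta$, which is bounded above by $-(1-\vep)^3\sin^2\theta<0$; the term $-r\,r_{,\phi}^2$ is manifestly nonpositive; and the remaining pieces, coming from the $\rho$-bracket, all carry a factor $\vep$ together with powers of $\sin\theta$ strictly higher than the leading $\sin^2\theta$. Hence each potentially positive term is dominated by the leading one once $\vep$ lies below a threshold of order $1/n$, which gives $M<0$ and is exactly the ``moderate $\vep$'' hypothesis.

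The main obstacle I anticipate is the uniform control of $M$. The $\phi$-dependent corrections carry $\cos(n\phi)$ of indefinite sign and so can genuinely be positive, and one must confirm they cannot overcome $-r^3\sin^2\theta$ anywhere on the surface, in particular near the poles where the leading term itself degenerates like $\sin^2\theta$. The feature that rescues the estimate, and which the bookkeeping must make precise, is that the corrections which could be positive all carry a power $\sin^{n+2}\theta$ or higher (those that are only of order $\sin^2\theta$ being nonpositive), so the ratio of any positive term to the leading term stays bounded by a fixed multiple of $\vep n$ uniformly in $(\theta,\phi)$. Establishing this explicit, $n$-dependent smallness condition on $\vep$ is the only step requiring real care; the rest is the sign-chasing of the first two paragraphs.
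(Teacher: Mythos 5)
Your reduction is the same as the paper's: at an interior extremum $\dot{\theta}=0$ and unit speed forces $\dot{\phi}\neq 0$, so the sign of $\ddot{\theta}=-\Gamma^{\theta}_{\phi\phi}\dot{\phi}^2$ is read off $\Gamma^{\theta}_{\phi\phi}$, whose numerator factors as $\cos\theta\sin\theta$ times a remainder (the paper writes $\Gamma^{\theta}_{\phi\phi}=-\tfrac{r\cos\theta\sin^3\theta}{\det(g)}\,f$ and must show $f\geq 0$; your $M$ is a positive multiple of $-f$). Where you genuinely diverge is in establishing the sign of that remainder. The paper substitutes $\rho=\sin\theta$, observes that $f(\rho)=1+a_1\rho^{n}+a_2\rho^{2n-2}+a_3\rho^{2n}+a_4\rho^{3n-2}+a_5\rho^{3n}$ is automatically positive when $\cos(n\phi)>0$, and when $\cos(n\phi)<0$ invokes Descartes' rule of signs to conclude that $f$ can only vanish on $(0,1)$ if $f(1)<0$; this reduces the whole problem to a single endpoint condition at the equator and yields explicit, fairly generous thresholds ($\vep^*=0.57,\,0.497,\,0.445$ for $n=2,3,4$). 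You instead dominate each potentially positive term by the leading $-(1-\vep)^3\sin^2\theta$; this is more elementary and avoids any root-counting, and the bookkeeping does close (every positive term divided by $\sin^2\theta$ is bounded by a power of $\vep n$), but it buys you only a threshold of order $1/n$ --- roughly $\vep\lesssim 0.1$ already for $n=3$, versus the paper's $0.497$. Since the lemma only asserts the conclusion ``for moderate $\vep$'', your argument does prove the statement, but with constants too weak to cover the parameter values ($\vep$ up to $0.3$ for $n=3$) actually used in the Poincar\'{e} sections the lemma is meant to underpin. Note also that you have flagged rather than executed the decisive uniform estimate; as written the proposal is a sound plan whose final step still needs the explicit term-by-term bounds carried out.
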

\begin{proof}
We remind the reader that $\theta$ is measured {\it down} from the
positive $z$-axis, and the northern and southern hemispheres are
given by $\theta\in(0,\pi/2)$ or $(\pi/2,\pi)$ respectively. Due
to symmetry, we need only show that $\theta$ cannot obtain a
maximum for $\theta\in(0,\pi/2)$.\\
From \eqref{geoth}, a maximum w.r.t.\ $\theta$ in the northern
hemisphere would require $\Gamma^\theta_{\phi\phi}>0$ for some
$\theta\in(0,\pi/2)$ (on the sphere,
$\Gamma^\theta_{\phi\phi}=-\cos\theta\sin\theta< 0$ for
$0<\theta<\pi/2$, ruling out this possibility). We can write \[
\Gamma^\theta_{\phi\phi}=-\frac{r\cos\theta\sin^3\theta}{det(g)}\
f(\sin\theta,\cos(n\phi);n,\vep). \] As the surface is positive
definite to prove the lemma we must show $f\geq 0$ in
$\Omega=\{0<\theta<\pi/2$, $0<\phi<2\pi\}$. Letting
$\rho=\sin\theta$ with $0<\rho<1$ in $\Omega$, we can write \[
f(\rho)=1+a_1\rho^n+a_2\rho^{2n-2}+a_3\rho^{2n}+a_4\rho^{3n-2}+a_5\rho^{3n}
\] where the $a_i$ depend on $\cos(n\phi),\ n$ and $\vep$ ($\phi$ here
being the coordinate at which the proposed maximum takes place).
When $\cos(n\phi)>0$, all the $a_i$ are positive and so
$f(\rho)>0$ in $\Omega$.\\ If, on the other hand, $\phi$ is such
that $\cos(n\phi)<0$, Descarte's rule of signs tells us that
$f(\rho)$ can only have 1 or 3 roots in $0<\rho<1$, and since
$f(0)>0$ this is impossible unless $f(1)<0$. Letting $f(1)=0$
gives a cubic in $\cos(n\phi)$, and we can then find explicitly
the value of $\vep$ as a function of $n$ at which the value of
$f(1)$ becomes negative for some values of $\phi$. If we call this
critical value $\vep^*$, then for example when $n=2,3,4$ we find
$\vep^*=0.57,0.497,0.445$.
\end{proof}

Note this lemma rules out the possibility of closed geodesics
which do not intersect the equator at some stage (the only
$\theta=$ constant geodesic is the equator). What it does not rule
out, however, is the possibility of a geodesic asymptotically
approaching the equator. In fact, this cannot occur as the equator
is of elliptic type. A rigorous proof of the non-hyperbolic nature
of the equator is too much of a digression; certainly numerical
evidence can be easily provided using Floquet theory (see
\cite{watquasi} for a detailed description) by calculating the
eigenvalues of the monodromy matrix and showing they remain on the
unit circle in the complex plane for moderate values of $\vep>0$
and various values of $n$. Having said that the most immediate
demonstration of the elliptic nature of the equator is to be found
in the Poincar\'{e} section shown in Fig.\ A.3.

\begin{prop} With the equatorial geodesic of elliptic type, any geodesic which intersects the equatorial
plane transversally must re-intersect the equatorial plane an
infinite number of times, at least for moderate values of $\vep$.
\end{prop}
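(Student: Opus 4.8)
The plan is to show that after one transversal equatorial crossing the geodesic is forced back to the plane $\theta=\pi/2$ in finite parameter length, and then to iterate. First I would record the transversality fact that makes the iteration legitimate: the equatorial geodesic is the unique solution of \eqref{geoth}--\eqref{geoph} with $\theta\equiv\pi/2$ and $\dot\theta\equiv 0$, so by uniqueness of solutions any \emph{other} geodesic meeting $\theta=\pi/2$ does so with $\dot\theta\neq 0$. Thus every equatorial intersection of a non-equatorial geodesic is automatically transversal, and it suffices to prove that at least one further intersection occurs; the claim of infinitely many then follows by repeating the argument at each successive crossing, alternating hemispheres.

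Next I would track the excursion into a hemisphere. Using the symmetry $\theta\mapsto\pi-\theta$ of \eqref{secpol}, assume without loss of generality that the geodesic enters the northern hemisphere, so that $\dot\theta<0$ just after the crossing and $\theta$ decreases into $(0,\pi/2)$. By the Lemma, $\theta$ admits no maximum on $(0,\pi/2)$; hence any interior turning point is a minimum, and once $\theta$ starts to increase it cannot turn back. Therefore $\theta$ decreases to a single closest approach to the north pole---either a genuine minimum $\theta_\ast\in(0,\pi/2)$, or $\theta_\ast=0$ in which case the geodesic passes smoothly through the pole---and thereafter increases monotonically. It remains to show this increase actually attains $\pi/2$. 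The only obstructions are (i) $\theta$ converging to an interior latitude $L\in(0,\pi/2)$ with $\dot\theta\to 0$, and (ii) $\theta\to\pi/2^{-}$ asymptotically with $\dot\theta\to 0$. Obstruction (i) is excluded at once, because a limit of this kind would force the orbit onto the circle $\theta=L$, and the equator is the only $\theta=\text{const}$ geodesic.

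The hard part is excluding obstruction (ii), the asymptotic approach to the equator, and this is precisely where the elliptic character of the equatorial geodesic is needed. An orbit with $\theta\to\pi/2^{-}$ and $\dot\theta\to 0$ would have its $\omega$-limit set contained in the equatorial periodic orbit, i.e.\ it would lie on a stable manifold of that orbit. A stable manifold of positive dimension exists only if the monodromy (Floquet) matrix of the linearisation about the equatorial geodesic carries a multiplier inside the unit circle. Since the equator is of elliptic type---all Floquet multipliers lying on the unit circle, as established above---no such stable manifold exists, (ii) cannot occur, and $\theta$ must reach $\pi/2$ at some finite $s$. This yields a genuine transversal re-intersection, and applying the same reasoning at each crossing produces infinitely many intersections with the equatorial plane. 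I expect the only delicate point to be upgrading the spectral (elliptic) information to the full absence of an attracting or centre-stable approach; rigorously this reduces to the non-degeneracy of the elliptic equatorial orbit, for which the Floquet and Poincar\'e-section evidence cited above is the substitute in the present setting.
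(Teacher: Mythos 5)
Your argument follows the paper's own proof essentially step for step: both use Lemma~1 to rule out interior maxima of $\theta$ in the hemisphere, exclude an asymptotic approach to an interior latitude on the grounds that the equator is the only $\theta=\mathrm{const}$ geodesic, invoke the elliptic (non-hyperbolic) character of the equatorial orbit to forbid an asymptotic approach to the equator, and then iterate crossing by crossing. The only cosmetic differences are your uniqueness argument for automatic transversality and your treatment of the polar passage (the paper handles case~2 by running the geodesic forwards and backwards from the pole), and both arguments leave the ellipticity of the equator supported at the same, essentially numerical, level of rigor as the paper itself.
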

\begin{proof} First we can see that any trajectory which intersects the
equatorial plane transversally cannot subsequently graze this
plane tangentially as it is an invariant plane. If we consider a
geodesic which crosses the equator with $\dot{\theta}<0$ (i.e.\
entering the northern hemisphere) then there are two
possibilities: 1) the geodesic does not pass through the north
pole, and 2) the geodesic passes through the north pole. In the
event of case 1), the geodesic must attain a minimum w.r.t.\
$\theta$ ($\theta$ is an initially positive decreasing continuous
function of arc-length and so must either: attain a minimum; pass
through zero (case 2); or asymptote to some constant value which
is impossible as the surface is compact and there are no
equilibrium points or closed geodesics contained in a hemisphere).
As the geodesic cannot attain a maximum w.r.t.\ $\theta$ in the
northern hemisphere or asymptote to the equator, it must
subsequently re-intersect the equator. In the event of case 2),
consider a geodesic which starts at the north pole. While there is
a degeneracy in defining $\phi$ at the poles there is no problem
in defining $\dot{\phi}$. Following the geodesic in this direction
it must intersect the equator for the reasons given in case 1),
but the same can be said if we follow the geodesic ``backwards'',
and so it must intersect the equator in both directions.
\end{proof}

A final consideration is the ranges of velocities the geodesics
may explore. On any of the planes of symmetry, either
$r_{,\theta}$ or $r_{,\phi}$ vanishes, thus (from \eqref{legenr})
$g_{\theta\phi}=0$. Therefore on the section we have \be
g_{\theta\theta} \dot{\theta}^2+g_{\phi\phi} \dot{\phi}^2=1, \ee
which is an ellipse of semi-axes
$1/\sqrt{g_{\theta\theta\phantom{\phi}}}$ and
$1/\sqrt{g_{\phi\phi}}$. Since $g_{\phi\phi}(\theta=\pi/2)=(1+\vep
\cos(n\phi))^2+\vep^2 n^2 \sin^2(n\phi)$, when choosing our
initial conditions to generate the Poincar\'{e} section plots we
should therefore choose random values for $\dot{\phi}$ in the
range $\dot{\phi}\in(-\dot{\phi}_m,\dot{\phi}_m)$ with
\begin{equation}
\dot{\phi}_m=\left\{ \begin{array}{cc} \frac{1}{1+\vep} & \quad
\textrm{if}\ \vep<\frac{1}{n^2-1} \\ \\
\frac{1}{\sqrt{\frac{n^2(1+\vep^2(n^2-1))}{(n^2-1)}}} & \quad
\textrm{if}\ \vep>\frac{1}{n^2-1}.
\end{array} \right.
\end{equation}

The initial conditions can now be chosen in the following way:
random values for $\phi$ are chosen in the range $(0,2\pi)$,
random values for $\dot{\phi}$ in the range
$(-\dot{\phi}_m,\dot{\phi}_m)$, and the value of $\dot{\theta}$ is
found from $\mathcal{H}=\tfrac{1}{2}$ (we will always take the
positive value to give a `downward' pointing geodesic). The
geodesic with these initial conditions is followed for a long time
and intersections with the equatorial plane with $\dot{\theta}>0$
are monitored. We show in Figure \ref{psecplots} some of the
resulting Poincar\'{e} sections for the case of the $n=3$ sectoral
harmonic surface. Some interesting points to note are:

\begin{enumerate}
\item There are three main families of {\bf closed geodesics};
these appear as fixed points of the Poincar\'{e} section, and are
as follows: Firstly there are $n$ {\it planar} geodesics as
described in Section 2 leading to fixed points at $\phi=\pi
i/n,i=1,\ldots,n$ and $\dot{\phi}=0$. Secondly on the sectoral
surfaces with $n$ odd there are $n$ {\it perpendicular} geodesics,
which are not planar but intersect the equator at right angles,
leading to fixed points roughly midway between those due to the
planar geodesics. Thirdly there are {\it oblique} geodesics which
are simple (period-1) and give
rise to fixed points off the horizontal axis of the Poincar\'{e} section.\\
Of course, there are an infinite number of closed geodesics on
topological spheres (see Bangert \cite{bangert} and Franks
\cite{franks}) and we make no attempt to classify them all.
\item The {\bf stability} of these closed geodesics is perhaps not what would be expected (Note: we are using only the Poincar\'{e}
  sections to draw conclusions about stability; a more rigorous stability analysis would make an interesting topic for
  future research). Consider first the sectoral surfaces with $n$ even, as shown in Figure 1 (b). The planar geodesics alternate
  between those for which $\partial^2 r/\partial \phi^2\leq 0$ and those for which $\partial^2 r/\partial \phi^2\geq 0$ everywhere
  along the curve. In analogy with the surfaces of revolution \cite{docarmo}, we might therefore expect that these geodesics would alternate
  between stability and instability, respectively. However this is not the case: all planar geodesics are in fact stable, leading to
  elliptic fixed points on the Poincar\'{e} section.\\
      If we now consider the sectoral surfaces for $n$ odd, then $\partial^2 r/\partial \phi^2$
      is both positive and negative along each planar geodesic. In fact, these planar geodesics are all unstable; what's more each is
      homoclinic to itself (at least for small $\vep$, see below) and thus admits two elliptic points within its homoclinic loops, see Figure \ref{psecplots}.
      These elliptic points form a fourth family of closed geodesics, and some numerical experimentation reveals these are also period-1.\\
      The perpendicular geodesics mentioned in 1.\ are all stable, leading to elliptic fixed points in the Poincar\'{e} section,
      and most importantly the oblique geodesics are unstable, leading to hyperbolic fixed points. These hyperbolic fixed points
      are heteroclinic to one another as Figure \ref{psecplots} shows (in fact there are complex heteroclinic networks, where up to $2n$ saddles
      are connected to one another), and this is the crucial ingredient leading to chaotic motion, which we shall describe next.
  \item As is clear from Figure \ref{psecplots}, {\bf chaotic} motion takes place for large $\vep$, that is as we deform the surface away from the sphere.
  The onset of chaos
  as depicted in Figure \ref{psecplots} is straight out of a textbook: the invariant manifolds of unstable fixed points intersect leading to a
  heteroclinic
  tangle, and as $\vep$ increases further rational invariant tori are destroyed. This phenomenon is well described in the literature, see for
  example Jos\'{e} and Saletan \cite{jose}, Calkin \cite{calkin} or Berry \cite{berry}. The value of $\vep$ at which this onset of chaotic motion takes
  place varies for different values of $n$, but note that we see chaotic motion for values of $\vep$ less than those mentioned in Lemma 1.
\end{enumerate}

\begin{center}
\begin{figure}
{(a) \includegraphics[width=0.8\textwidth]{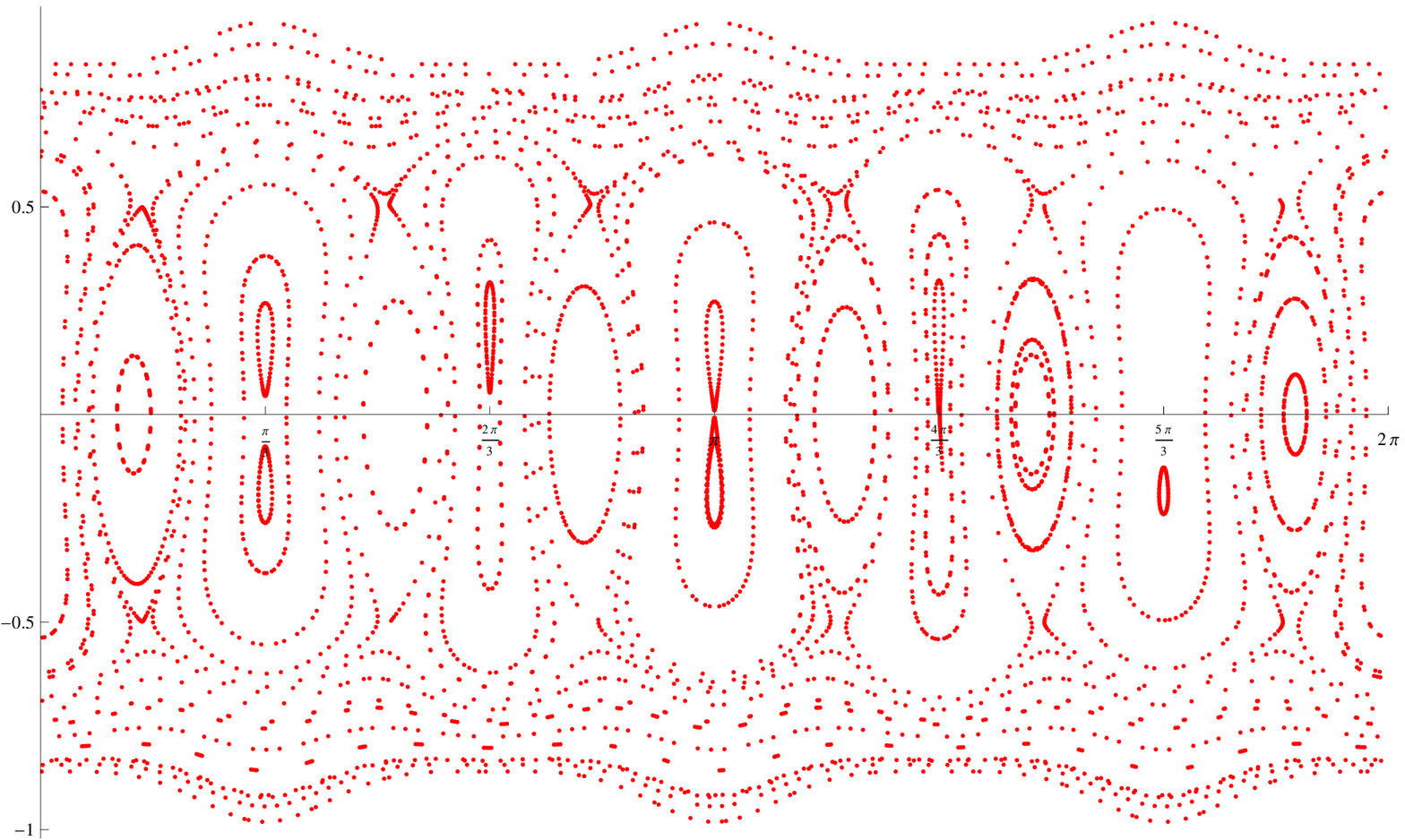} \\
(b) \includegraphics[width=0.8\textwidth]{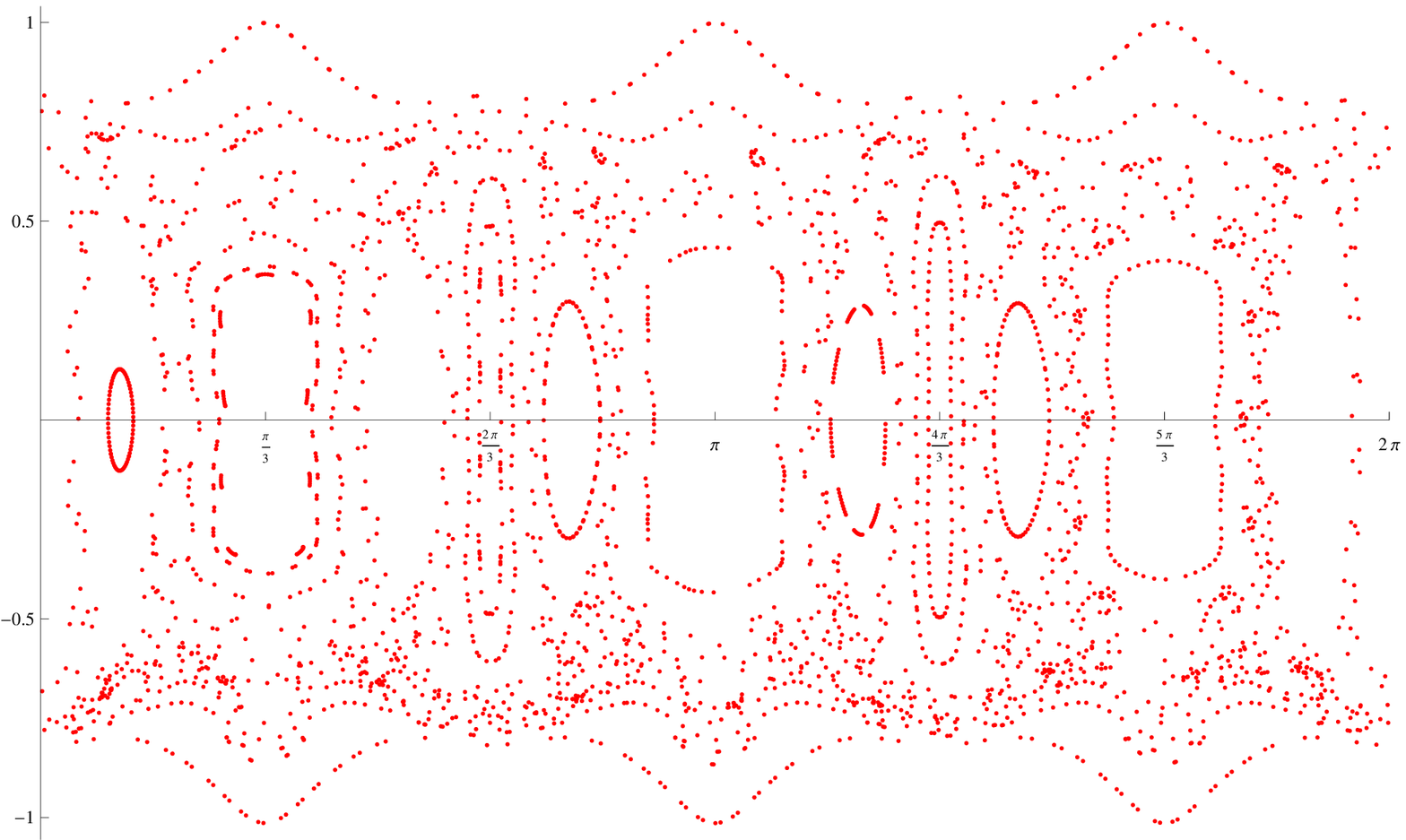} \\
(c)
\includegraphics[width=0.8\textwidth]{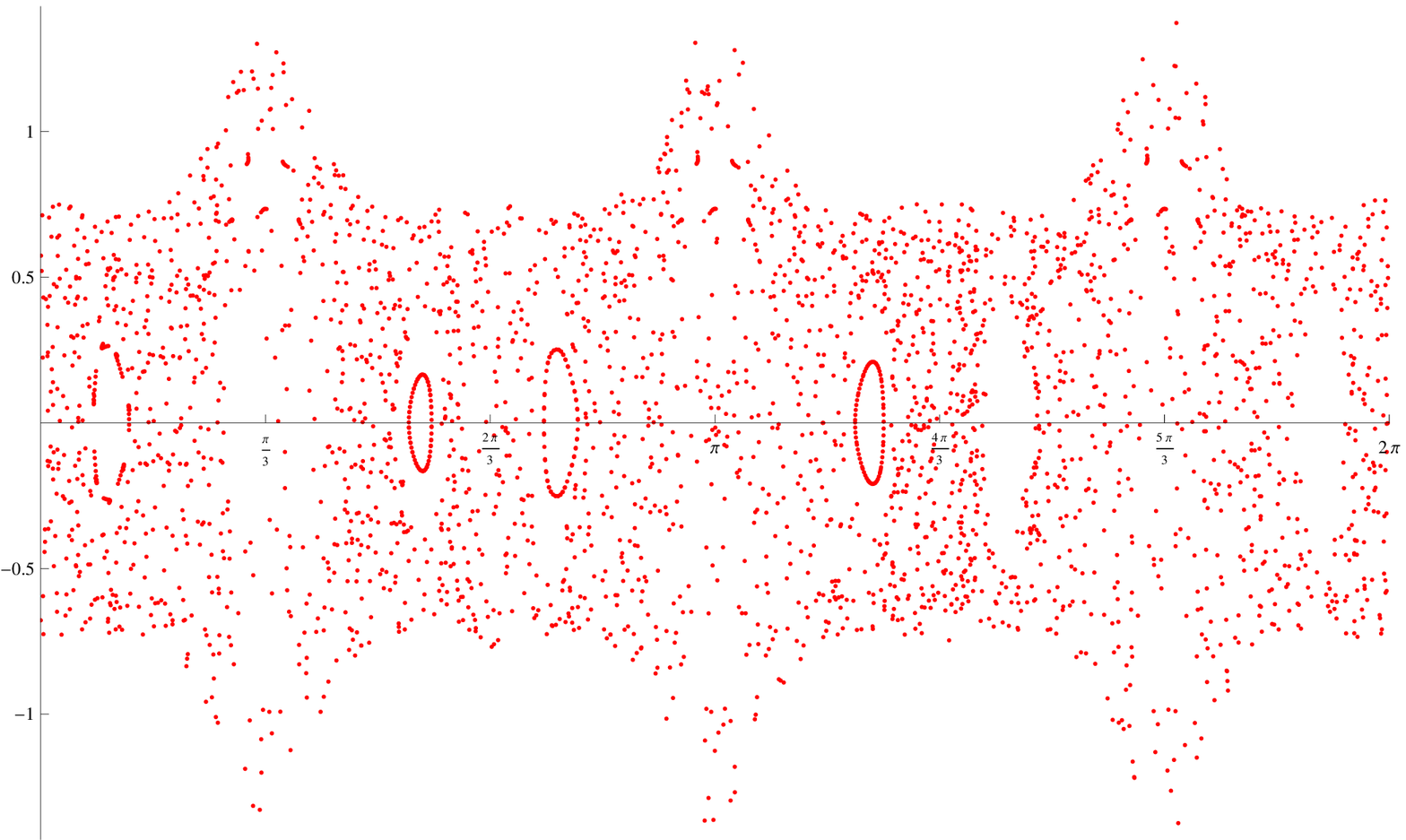}}
\caption{Sample
equatorial Poincar\'{e} sections for the $n=3$ sectoral harmonic
surface, with the following values of $\vep$: (a) $\vep=0.1$; (b)
$\vep=0.2$; (c) $\vep=0.3$. The axes are $\phi$ and $\dot{\phi}$.
See text for discussion.}\label{psecplots}
\end{figure}
\end{center}

The onset of chaotic motion is visible in the Poincar\'{e} section
of all sectoral harmonic surfaces, with the exception of the $n=1$
surface. On this surface the geodesic flow is perfectly regular
for all values of $\vep\in[0,1)$. This is because the $n=1$
sectoral surface is in fact a surface of revolution; it is the
surface formed by revolving the lima\c{c}on around the $x$-axis,
as we may show:

The $n=1$ sectoral surface develops a `dimple' along  the
$x$-axis. Let us rotate the axes (or alternatively the surface)
about the $y$-axis so that the dimple is along the $z$-axis. As
this rotation amounts to $(x,y,z)\to(z,y,-x)$ we can define a new
set of spherical coordinates $(\tb,\pb)$ related to the old
spherical coordinates by \bes \sin\theta=\sqrt{1-\sin^2\tb
\cos^2\pb},\quad \cos\phi=\frac{\cos\tb}{\sqrt{1-\sin^2\tb
\cos^2\pb}}. \ees The $n=1$ sectoral harmonic surface is therefore
given by \be r=1+\vep \sin\theta \cos\phi=1+\vep\cos\tb, \ee the
parametric form of a lima\c{c}on, and clearly $\partial
r/\partial\pb=0$. This surface will therefore have integrable
geodesic flow, a fact that will prove useful in the next section
as a check of the procedure carried out.

The Poincar\'{e} sections provide strong evidence for the
following conclusion: the geodesic equations for the sectoral
harmonic surfaces with $n>1$ are not integrable. However, the
Poincar\'{e} sections cannot be used as proof of this statement,
for the following reasons:\\ 1) as the break up of regular motion
apparently happens only for larger values of $\vep$, there are
possibly small values of $\vep$ for which the equations are
integrable, as in the general Henon-Heiles system for example \cite{mccauley};\\
2) the integrations which generate the plots are run over a finite
time; if the integrations were run over a longer time interval
perhaps a sequence of points which seems to form a closed curve
would begin to wander away. This `stickiness' phenomenon is in
fact quite common in chaotic systems (see for example Murray and
Dermott \cite{murray}), and\\ 3) while great pain may be taken to
ensure accuracy in numerical investigations, there is no escaping
the fact that these are only approximate solutions to a system of
nonlinear ODE's.

Taking these points on board we seek a rigorous and analytical
test of the integrability or otherwise of the geodesic equations.
This will be the goal of the next section.

%%%%%%%%%%%%%%%%%%%%%%%%%%%%%%%%%%%%%%%%%%%%%%%%%%%%%%%%%%%%%%%%%%%%%%%%%%
%%%%%%%%%%%%%%%%%%%%%%%%%%%%%%%%%%%%%%%%%%%%%%%%%%%%%%%%%%%%%%%%%%%%%%%%%%

\section{The variational equations and non-integrability}

While proving a dynamical system is integrable is very
challenging, and amounts to essentially finding the first
integrals, or showing the Hamiltonian is separable in some sense
(see for example \'{O} Math\'{u}na \cite{mathuna}), there are a
number of ways of proving a dynamical system is {\it not}
integrable. This is because integrable systems are very
structured, and must admit a number of properties; showing one of
these properties does not hold suffices to prove the system is not
integrable. While Poincar\'{e} famously showed the
non-integrability of the 3-body problem over 100 years ago
\cite{poincare}, a particularly fruitful modern approach is that
due to Morales and Ramis, as expounded in
\cite{morruiz},\cite{mrthm1} and \cite{mrthm2}, and applied to the
3-body problem in \cite{bouch} and \cite{tsy} among others.

This method focuses on the variational equations which arise from
linearizing around a non-constant solution to the dynamical
system. In loose terms (which we will define more precisely
below), if a Hamiltonian system is integrable then the variational
equations will be solvable, in the sense of differential Galois
theory. Consequently, if the variational equations are not
solvable then the Hamiltonian system is not integrable. This means
we need only focus on the variational equations, which is a great
advantage as the variational equations are of course linear and
therefore much easier to analyse. We will give a brief summary of
the required results first.

\subsection{Solvability and the Morales-Ramis theorem}

Consider the system of linear ordinary  differential equations \be
\xi'=A\xi, \quad '=d/dz \label{sysord} \ee where the elements of
$A$ are in some field $K$ (typically the field of meromorphic
functions, $\mathbb{C}(z)$). We may construct the Picard-Vessiot
extension to this field $L/K$ by adjoining to $K$ the solutions to
\eqref{sysord}. Associated with \eqref{sysord} is the differential
Galois group $\mathcal{G}$ which is the group of automorphisms of
$L/K$ which leave fixed the elements of $K$. The linear system is
{\it solvable} if we may go from the base field $K$ to the field
extension $L/K$ by adjoining to $K$
\begin{enumerate} \item integrals, \item exponentiation of
integrals, and \item algebraic functions
\end{enumerate} of the elements of $K$ (\cite{morruiz} gives a
complete introduction to this topic). Thus we can `build' the
solutions to \eqref{sysord} from the elements of the field that
the coefficients of \eqref{sysord} belong to, by performing a
finite number of `allowed' operations. Solutions of this type may
be referred to as closed form or Louivillian solutions, and the
differential equation may be called solvable or Louivillian.

We define the variational equations as follows: for the dynamical
system  $dx/dt=f(x)$ with non-constant solution $\gamma$, we let
$x=\gamma+\xi$ and the first variational equations are given by
\be \der{\xi}{t}=\left(\pder{f}{x}\right)_\gamma \xi. \ee The
variational equations can be separated into a tangential and a
normal component; as autonomous Hamiltonian systems admit the
Hamiltonian itself as a first integral this is `inherited' by the
tangential variational equation which is therefore always
solvable. And now importantly, the normal variational equation
will inherit any other first integrals, {\it if they exist}, and
will therefore also be solvable. This is the Morales-Ramis
theorem, which we will quote from \cite{morruiz}:

\begin{mrthm} For a $2n$ dimensional Hamiltonian system assume there are $n$ first integrals which are meromorphic,
in involution and independent in the neighborhood of some
non-constant solution. Then the identity component of the
 differential Galois group of the normal variational equation is an abelian subgroup of the symplectic group.
\end{mrthm}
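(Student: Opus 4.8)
The plan is to establish two structural facts about the (normal) variational equation and then combine them through the Picard--Vessiot dictionary. First I would fix the non-constant solution $\gamma(t)$ guaranteed by the hypothesis and write the variational equation as $\xi'=A(t)\xi$ with $A=(\partial X_H/\partial x)_\gamma$ having entries in the base field $K$. Because the underlying vector field is Hamiltonian, $A(t)$ is infinitesimally symplectic, so its fundamental matrix lies in the symplectic group; passing to the Picard--Vessiot extension this shows the differential Galois group $\mathcal G$ embeds in $\mathrm{Sp}$. This first fact is essentially a direct computation from the Hamiltonian form of $X_H$.

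The second, and more useful, fact is that each first integral $F_i$ produces a \emph{linear} first integral of the variational equation. Setting $L_i(\xi)=dF_i(\gamma(t))\cdot\xi$, I would differentiate along the flow and use $\{F_i,H\}=0$: differentiating the identity $dF_i\cdot X_H\equiv 0$ in the phase variables yields $\mathrm{Hess}(F_i)\,X_H+dF_i\,A=0$, which forces $\tfrac{d}{dt}L_i(\xi)=0$, so $L_i$ is constant on every solution. In the Picard--Vessiot extension this means each $L_i$ defines a $\mathbb C$-valued functional $\lambda_i$ on the solution space $V$; since the coefficients $dF_i(\gamma)$ lie in $K$ and each value $\lambda_i(\xi)$ is a constant, every $\sigma\in\mathcal G$ satisfies $\lambda_i\circ\sigma=\lambda_i$. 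Independence of the $F_i$ near $\gamma$ makes the $\lambda_i$ linearly independent, and involution $\{F_i,F_j\}=0$ translates, via the symplectic form $\Omega$ on $V$, into isotropy of the dual vectors $w_i$, so that $W=\mathrm{span}(w_1,\dots,w_n)$ is Lagrangian.

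The conclusion then follows from linear algebra inside $\mathrm{Sp}$. Fixing every $\lambda_i$ means $\mathrm{Image}(S-I)\subseteq\bigcap_i\ker\lambda_i=W^{\Omega}=W$ for each $S\in\mathcal G$, so in a symplectic basis adapted to $W$ one gets $S=\left(\begin{smallmatrix} A & B\\ 0 & D\end{smallmatrix}\right)$ with $D=I$; the symplectic condition then forces $A=I$ and $B$ symmetric. These matrices form an abelian group, so $\mathcal G$ --- and a fortiori its identity component $\mathcal G^{0}$ --- is an abelian subgroup of $\mathrm{Sp}$. I expect the main obstacle to be carrying out this scheme on the \emph{normal} rather than the full variational equation: one must quotient out the tangential solution $\dot\gamma=X_H(\gamma)$ together with the level $\{H=h\}$, check that the reduced symplectic form is nondegenerate and still defined over $K$, and verify that the remaining $n-1$ integrals descend while staying independent and in involution along $\gamma$. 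This symplectic reduction within the differential-Galois category is the delicate step, and it is plausibly also why the classical statement is phrased conservatively for the identity component $\mathcal G^{0}$.
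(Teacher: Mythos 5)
The paper does not prove this statement at all: it is the Morales--Ramis theorem, quoted verbatim from Morales Ruiz's monograph, so there is no in-paper argument to compare yours against. Judged on its own terms, your proposal captures the easy half (the Galois group of the variational equation of a Hamiltonian field lands in $\mathrm{Sp}$) but has a genuine gap at its central step. You assume that each meromorphic first integral $F_i$ yields a \emph{linear} first integral $L_i(\xi)=dF_i(\gamma(t))\cdot\xi$ of the variational equation, with the $n$ resulting linear forms independent along $\gamma$. Neither assumption is available. A meromorphic $F_i$ may have a pole along $\gamma$, and even when it is holomorphic its differential restricted to $\gamma$ may vanish identically or the differentials $dF_1|_\gamma,\dots,dF_n|_\gamma$ may be linearly dependent at every point of $\gamma$, even though the $F_i$ are functionally independent on a neighbourhood (independence ``in the neighborhood'' does not give independence \emph{on} the curve). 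The correct replacement is Ziglin's lemma: from a meromorphic first integral one extracts the lowest-order nonvanishing homogeneous term of its expansion along $\gamma$, which is a homogeneous \emph{polynomial} (in general rational) first integral of the variational equation of some degree $k\geq 1$, not a linear form. A polynomial invariant of degree $k>1$ constrains the Galois group far more weakly than a fixed linear functional, which is why your argument overshoots: you conclude that all of $\mathcal G$ is abelian (a unipotent group of symmetric-matrix type), whereas the theorem only asserts abelianness of the identity component --- and that weaker conclusion is sharp.

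The missing idea is therefore the algebraic-group-theoretic core of Morales--Ramis: one must show that a connected algebraic subgroup of $\mathrm{Sp}(2m,\mathbb C)$ whose Lie algebra preserves $m$ functionally independent, pairwise Poisson-commuting rational invariants has abelian Lie algebra. This is a nontrivial Lie-theoretic statement (it is where the involution hypothesis actually enters, through the induced Poisson structure on invariants, rather than through an isotropy computation on a span of covectors), and no amount of linear algebra on fixed linear functionals substitutes for it. Your closing remarks about reducing from the full variational equation to the normal one correctly identify a second technical layer, but the reduction is the routine part; the Ziglin-type extraction of higher-order invariants and the Lie-algebra lemma are the parts your scheme cannot reach.
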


In other words, the normal variational equation (NVE) is solvable
in the sense described above. One advantage of the MR theorem is
that it considers the differential Galois group $\mathcal{G}$
associated with the differential equation \eqref{sysord}, which is
a Lie group, rather than the monodromy group used in Ziglin's
theorem, which is discontinuous.

This theorem suggests a procedure for testing the integrability of
a Hamiltonian system: find a non-constant solution; linearize
around it and separate out the NVE; test the NVE for solvability.
If the NVE is not solvable, then the Hamiltonian system is not
integrable. To test the solvability of the NVE, we will use the
Kovacic algorithm.

\subsection{Kovacic algorithm}

The Kovacic algorithm can be used to find the closed form
solutions of a second order linear ODE with rational coefficients,
and is complete in the sense that if the algorithm does not find
such solutions then none exist. The original source for the
algorithm is Kovacic \cite{kovacicmain}, but see also
\cite{duval}, \cite{mrthm2}, \cite{bardin}, \cite{humanez}, to
name a few, for presentation, discussion, and application of the
algorithm. As we will see, the NVE associated with the system
studied in this work will be Fuchsian, which allows for a more
concise version of the algorithm. As such, we will follow most
closely the presentation of the Kovacic algorithm as given in
Churchill and Rod \cite{church}, and note that the algorithm is
also valid in the non-Fuchsian case.

Consider the linear differential equation \begin{align}
\xi''=r(z)\xi, \quad r\in\mathbb{C}(z). \label{DE} \end{align} If
this equation is Fuchsian we can write \be
r(z)=\sum_{j=1}^k\frac{\beta_j}{(z-a_j)^2}+\sum_{j=1}^k
\frac{\delta_j}{z-a_j}, \label{pfe} \ee where $k$ is the number of
finite regular singular points at locations $z=a_j$. When $\sum
\delta_j=0$ then $z=\infty$ is also a regular singular point, with
$\beta_{\infty}=\sum(\beta_j+\delta_j a_j)$. The indicial
exponents are \[
\tau_j^\pm=\tfrac{1}{2}\left(1\pm\sqrt{1+4\beta_j}\right),\quad
\tau_\infty^\pm=\tfrac{1}{2}\left(1\pm\sqrt{1+4\beta_\infty}\right)
\] at $z=a_j$ and $z=\infty$ respectively. Kovacic
\cite{kovacicmain} proved the following theorem:

\begin{kthm}
Let $\mathcal{G}$ be the differential Galois group associated with
\eqref{DE}, and note $\mathcal{G}\in SL(2,\mathbb{C})$. Then only
one of four cases can hold:

\begin{enumerate}
\item $\mathcal{G}$ is triangulisable (or reducible), in which case \eqref{DE} has
a solution of the form $\xi=e^{\int \omega}$ with
$\omega\in\mathbb{C}(z)$ (we could say $\omega$ is algebraic over
$\mathbb{C}(z)$ of degree 1);
\item $\mathcal{G}$ is conjugate to a subgroup of \[ \left\{
\begin{pmatrix} \lambda & 0 \\ 0 & \lambda^{-1}
\end{pmatrix},\lambda\in\mathbb{C}/\{0\}
\right\}\cup\left\{
\begin{pmatrix} 0 & -\beta^{-1} \\ \beta & 0
\end{pmatrix},\beta\in\mathbb{C}/\{0\}
\right\}, \] in which case \eqref{DE} has a solution of the form
$\xi=e^{\int \omega}$ where $\omega$ is algebraic over
$\mathbb{C}(z)$ of degree 2;
\item $\mathcal{G}$ is finite, in which case \eqref{DE} has a solution of the form
$\xi=e^{\int \omega}$ where $\omega$ is algebraic over
$\mathbb{C}(z)$ of degree 4, 6 or 12;
\item $\mathcal{G}=SL(2,\mathbb{C})$, in which case \eqref{DE} is
not solvable.
\end{enumerate}
\end{kthm}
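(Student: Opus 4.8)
\emph{Proof proposal.} The statement to establish is Kovacic's classification; the remark $\mathcal{G}\subseteq SL(2,\mathbb{C})$ is itself the natural starting point, and I would justify it first. Because \eqref{DE} is written in reduced form with no $\xi'$ term, Abel's identity shows that the Wronskian of any pair of solutions is a nonzero constant. Every element of the differential Galois group is a $\mathbb{C}$-linear automorphism of the two-dimensional solution space commuting with $d/dz$, so it fixes this constant Wronskian; hence each group element has determinant $1$, and $\mathcal{G}$ is an algebraic subgroup of $SL(2,\mathbb{C})$. The theorem then reduces to two ingredients: (i) a complete classification of the algebraic subgroups of $SL(2,\mathbb{C})$ up to conjugacy, and (ii) a dictionary, furnished by the Picard--Vessiot Galois correspondence, that reads off from each conjugacy class the form of a Liouvillian solution $\xi=e^{\int\omega}$ together with the algebraic degree of $\omega$ over $\mathbb{C}(z)$.

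For ingredient (i) the plan is to exploit the induced action of $\mathcal{G}$ on the projective line $\mathbb{P}^1$ via the quotient $PSL(2,\mathbb{C})$. The Lie--Kolchin theorem guarantees that a connected solvable subgroup is triangulizable, which isolates the reducible case. Passing to the projective action, one then distinguishes whether $\mathcal{G}$ fixes a point of $\mathbb{P}^1$ (reducible), fixes an unordered pair of points but no single point (imprimitive, the monomial or ``dihedral'' type conjugate to the group displayed in case 2), has a finite orbit structure but no invariant pair (finite primitive), or admits no finite invariant set at all. Klein's classification of the finite subgroups of $PSL(2,\mathbb{C})$ supplies exactly the tetrahedral, octahedral and icosahedral groups in the primitive finite case, and the only remaining possibility is $\mathcal{G}=SL(2,\mathbb{C})$ itself.

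For ingredient (ii) each structural alternative translates directly into the shape of a solution. If $\mathcal{G}$ fixes a line in the solution space, the corresponding eigensolution $\xi$ has logarithmic derivative $\omega=\xi'/\xi$ invariant under $\mathcal{G}$, so $\omega$ lies in the fixed field $\mathbb{C}(z)$, giving case 1. If $\mathcal{G}$ preserves an unordered pair of lines, the two candidate logarithmic derivatives are permuted by $\mathcal{G}$, so $\omega$ is a root of a $\mathcal{G}$-invariant quadratic over $\mathbb{C}(z)$, i.e.\ algebraic of degree $2$, giving case 2. In the finite primitive cases the orbit on $\mathbb{P}^1$ of the point associated with a solution has size $4$, $6$ or $12$, and $\omega$ is correspondingly algebraic of that degree, giving case 3. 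Finally, when $\mathcal{G}=SL(2,\mathbb{C})$ there is no finite orbit of lines from which to build an algebraic $\omega$; since the identity component is not solvable, no Liouvillian solution exists, which is case 4.

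The principal obstacle is ingredient (i), and within it the finite primitive case: establishing that the only primitive finite subgroups of $PSL(2,\mathbb{C})$ are the tetrahedral, octahedral and icosahedral groups, and then checking that their minimal orbit sizes on $\mathbb{P}^1$ are exactly $4$, $6$ and $12$ so that the degrees in case 3 come out correctly. A secondary technical point is making the dictionary (ii) rigorous through the structure theory of Picard--Vessiot extensions, ensuring that solvability in the Liouvillian sense corresponds precisely to the identity component of $\mathcal{G}$ being solvable, hence to cases 1--3, with case 4 as the unique non-solvable alternative.
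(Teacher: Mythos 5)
The paper offers no proof of this statement at all: it is quoted verbatim as a known result, attributed to Kovacic's 1986 paper, and used as a black box in the subsequent algorithm. So there is no internal argument to compare yours against; I can only assess your sketch on its own terms, against the standard proof in the literature (Kovacic, Kaplansky, Churchill--Rod). On that measure your outline is correct and follows the canonical route: the Wronskian/Abel argument for $\mathcal{G}\subseteq SL(2,\mathbb{C})$ is right; the trichotomy reducible / imprimitive / primitive via the projective action, with Lie--Kolchin handling the solvable connected case and Klein's classification handling the finite primitive case, is exactly how the classification of algebraic subgroups of $SL(2,\mathbb{C})$ is proved; and the dictionary sending a $\mathcal{G}$-stable line (resp.\ pair of lines, resp.\ finite orbit) to $\omega=\xi'/\xi$ rational (resp.\ quadratic, resp.\ of degree equal to the orbit size) is the standard mechanism, with the minimal orbit sizes $4$, $6$, $12$ for the tetrahedral, octahedral and icosahedral groups giving the degrees in case 3. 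Your appeal to ``$\mathcal{G}^0$ solvable iff Liouvillian solutions exist'' to dispose of case 4 is also the right tool.

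Two caveats. First, what you have written is an honest plan, not a proof: the two items you yourself flag as obstacles (the classification of finite primitive subgroups of $PSL(2,\mathbb{C})$, and the rigorous form of the Galois correspondence for Liouvillian extensions) carry essentially all of the weight, so the proposal should be read as a correct reduction to two classical theorems rather than a self-contained argument. Second, a small point of precision worth noting: the four cases as stated in the paper are not mutually exclusive (a finite reducible group satisfies both 1 and 3), and Kovacic's actual theorem asserts that \emph{at least} one case holds, with the algorithm testing them in order of increasing degree; your projective-orbit trichotomy, if you insist on ``no fixed point'' and ``no invariant pair'' in the later cases, actually produces the disjoint version, which is fine but slightly stronger than what the quoted statement needs.
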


To show the NVE is not solvable, we must check that each of the
cases 1-3 cannot hold. If this is the case, then
$\mathcal{G}=SL(2,\mathbb{C})$ whose identity component (also
$SL(2,\mathbb{C})$) is not abelian, and therefore the original
Hamiltonian system is not meromorphically integrable according to
the Morales-Ramis theorem.

Kovacic provides an algorithm whereby each case is checked in two
stages: first, we consider certain combinations of the indicial
exponents and retain all those leading to a non-negative integer
$d$, i.e.\ $d\in\mathbb{N}_0$. Second, for each of these values of
$d$ we attempt to construct a monic polynomial of degree $d$, or
possibly a set of polynomials, which satisfy certain equations. If
successful, the solution to \eqref{DE} can be found. We will state
the details for each case in turn:

\medskip

\noindent{\bf Case 1} Define the modified indicial exponents
\begin{align*}
&\alpha_j^\pm=\tau_j^\pm\ \textrm{if}\ \beta_j\neq 0;\quad
\alpha_j^\pm=1\ \textrm{if}\ \beta_j=0\ \textrm{and}\ \delta_j
\neq 0;\quad \alpha_j^\pm=0\ \textrm{if}\ \beta_j=\delta_j=0, \\
&\alpha_\infty^\pm=\tau_\infty^\pm\ \textrm{if}\ \beta_\infty\neq
0; \quad \alpha_\infty^+=1,\alpha_\infty^-=0\ \textrm{if}\
\beta_\infty=0.
\end{align*} Find all combinations of these modified indicial
exponents such that \[
d=\alpha_\infty^\pm-\sum_{j=1}^k\alpha_j^\pm \] is a non-negative
integer. For each $d$ found, look for a monic degree $d$
polynomial $P(z)$ satisfying \[ P''+2\theta
P'+(\theta'+\theta^2-r)P=0, \quad \theta=\sum_{j=1}^k
\frac{\alpha_j^\pm}{z-a_j}. \] If found, a solution to \eqref{DE}
will be $\xi=e^{\int \omega}$ where $\omega=\theta+P'/P$.

\medskip

\noindent{\bf Case 2} Define the following sets: \begin{align*}
&E_j=\{2+e\sqrt{1+4\beta_j},e=0,\pm 2\}\cap\mathbb{Z}\
\textrm{if}\ \beta_j\neq 0; \\
&E_j=\{4\}\ \textrm{if}\ \beta_j=0,\delta_j\neq 0;\quad E_j=\{0\}\
\textrm{if}\ \beta_j=\delta_j=0;\\
&E_\infty=\{2+e\sqrt{1+4\beta_\infty},e=0,\pm 2\}\cap\mathbb{Z}\
\textrm{if}\ \beta_\infty\neq 0; \\
&E_\infty=\{0,2,4\}\ \textrm{if}\ \beta_\infty=0.
\end{align*} Find all combinations of $e_j\in E_j$ and
$e_\infty\in E_\infty$, not all even integers, so that \[
d=\frac{1}{2}\Big(e_\infty-\sum_{j=1}^k e_j \Big) \] is a
non-negative integer. For each $d$ found, look for a monic degree
$d$ polynomial $P(z)$ satisfying \[ P'''+3\theta
P''+(3\theta^2+3\theta'-4r)P'+(\theta''+3\theta\theta'+\theta^3-4r\theta-2r')P=0,\quad
\theta=\frac{1}{2}\sum_{j=1}^k \frac{e_j}{z-a_j}.
\] If found, a solution to \eqref{DE} will be $\xi=e^{\int \omega}$ where
$\omega$ is a root of
$\omega^2+\phi\omega+(\tfrac{1}{2}\phi'+\tfrac{1}{2}\phi^2-r)=0$
and $\phi=\theta+P'/P$.

\medskip

\noindent{\bf Case 3} In this case we must follow the procedure
below for each of the subcases $N=4,6,12$. Define the following
sets:
\begin{align} &F_j=\{6+\tfrac{12e}{N}\sqrt{1+4\beta_j},e=0,\pm
1,\ldots,\pm\tfrac{N}{2}\}\cap\mathbb{Z}\
\textrm{if}\ \beta_j\neq 0; \nonumber \\
&F_j=\{12\}\ \textrm{if}\ \beta_j=0,\delta_j\neq 0;\quad
F_j=\{0\}\
\textrm{if}\ \beta_j=\delta_j=0; \label{fsets}\\
&F_\infty=\{6+\tfrac{12e}{N}\sqrt{1+4\beta_\infty},e=0,\pm
1,\ldots,\pm\tfrac{N}{2}\}\cap\mathbb{Z}. \nonumber
\end{align} Find all combinations of $f_j\in F_j$ and
$f_\infty\in F_\infty$ so that \[
d=\frac{N}{12}\Big(f_\infty-\sum_{j=1}^k f_j \Big) \] is a
non-negative integer. Now define \[
\theta=\frac{N}{12}\sum_{j=1}^k \frac{f_j}{z-a_j}, \quad
S=\prod_{i=1}^k (z-a_j). \] Let $P_N=-P$ where $P(z)$ is a monic
polynomial of degree $d$, and recursively define a set of
polynomials $P_{N-1},P_{N-2},\ldots,P_{-1}$ in the following way:
\[ P_{i-1}=-SP_i'+[(N-i)S'-S\theta]P_i-(N-i)(i+1)P_{i+1},\quad
i=N,N-1,\ldots,0. \] If we find $P_{-1}\equiv 0$, then use the set
of polynomials $P_i$ as coefficients in the following polynomial
and find a solution $\omega$: \[ \sum_{i=0}^N
\frac{S^iP_i}{(n-i)!}\omega^i=0.
\] A solution to \eqref{DE} will be $\xi=e^{\int \omega}$.

\medskip

If these three cases fail to produce a closed form solution, it is
because such solutions do not exist. We are now in a position to
derive the NVE and apply the Kovacic algorithm to it.

\subsection{The normal variational equation}

First we must derive a non-constant  solution to linearize around
(there are no constant solutions to geodesic equations). In
general, it is a difficult task to find an exact solution to
geodesic equations, even for simple symmetrical manifolds. In the
case of the sectoral harmonic surfaces however, we are greatly
aided by the existence of certain planes of symmetry, leading to
planar geodesics. These planes of symmetry play the same role in
this context as invariant planes play in more general contexts. In
what follows we will take the equatorial planar geodesic as our
nominal solution (see Appendix A for some comments on the use of
meridianal planar geodesics).

In the equatorial plane $\theta=\tfrac{\pi}{2}, \dot{\theta}=0$,
the geodesic equation is (from \eqref{geoth},\eqref{geoph} and
noting that $\wt{\Gamma}^\theta_{\phi\phi}=0$, where we use a
tilde to denote the value on $\theta=\tfrac{\pi}{2}$) \be
\ddot{\phi}+\wt{\Gamma}^\phi_{\phi\phi}\ \dot{\phi}^2=0 \ee which
admits Hamiltonian \be 2\mathcal{H}=
\wt{g}_{\phi\phi}\dot{\phi}^2=1. \label{redham} \ee To solve for
this equatorial planar geodesic we would need to rewrite
\eqref{redham} as \be \int ds=\int d\phi \sqrt{\wt{g}_{\phi\phi}}
=\int d\phi \sqrt{(1+\vep\cos(n\phi))^2+\vep^2 n^2 \sin^2(n\phi)}
\label{elliptic}\ee and solve to find $s=s(\phi)$, then invert
this relationship to find $\phi=\phi(s)$. The integrals in
\eqref{elliptic} can be written in terms of elliptic integrals of
various kinds, at least for some values of $n$, however the
analysis would benefit from a unified treatment with $n$ as a
parameter. That the integrals in \eqref{elliptic} are daunting
should come as no surprise: we are essentially trying to find the
arc-length parameterization of the curve $r=1+\vep\cos(n\phi)$,
which we could describe as a lima\c{c}on with $n$ `lobes', and
there are few curves for which the arc-length is known exactly.

Fortunately we do not need to solve \eqref{elliptic} exactly;  for
the sake of the NVE we can make a change of independent variable
which requires then only \eqref{redham}, as we will show. Let us
denote the planar geodesic by \be
(\theta,\phi,\dot{\theta},\dot{\phi})=(\tfrac{\pi}{2},\phi_0,0,\dot{\phi}_0),
\ee and let $\theta=\tfrac{\pi}{2}+\xi,\ \phi=\phi_0+\eta$.
Linearizing around this solution we find the equation in
$\ddot{\xi}$ naturally decouples from that of $\ddot{\eta}$, and
since the equatorial/invariant plane is given by
$\theta=\tfrac{\pi}{2}$ the variation normal to this plane is
given by $\xi$, and thus the NVE is \be
\ddot{\xi}=&\left(-\wt{\Gamma}^{\theta}_{\phi\phi,\theta}\dot{\phi}^2\right)\,\xi+
\left(-2\wt{\Gamma}^{\theta}_{\theta\phi}\dot{\phi}\right)\,\dot{\xi},
\label{nve1} \ee where the coefficients are evaluated along the
nominal equatorial geodesic. Now we make the change of independent
variable $z=\vep\cos(n \phi(s))$ so that \be \der{}{s}=-\vep
n\sin(n\phi)\der{\phi}{s}\der{}{z}=-\frac{\vep n
\sin(n\phi)}{\sqrt{\wt{g}_{\phi\phi}}}\der{}{z} \ee and so on. We
may replace instances of $\cos(n\phi)$ with $z/\vep$, to give an
NVE with rational polynomial coefficients (this process is
sometimes referred to as algebrization, and preserves the identity
component of the differential Galois group \cite{simon}), which we
may write as (letting $\xi'=d\xi/dz$ etc.) \be
\xi''+p(z)\xi'+q(z)\xi=0. \label{nve2} \ee Finally, we transform
this equation into the standard form required for the Kovacic
algorithm \[ \xi''=r(z)\xi
\] where \[ r=-q+\tfrac{1}{4}p^2+\tfrac{1}{2}p' \] and we extend $z$ to the complex domain. We are now in
a position to state the main result of his paper:

\begin{thm}
The geodesic equations for sectoral harmonic surfaces with $n>1$
are not integrable.
\end{thm}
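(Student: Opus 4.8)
The plan is to finish the Morales-Ramis argument already set up: having reduced the problem to the single second-order equation $\xi''=r(z)\xi$ built from \eqref{nve2} and its standard form, it suffices to show that its differential Galois group $\mathcal{G}$ is all of $SL(2,\mathbb{C})$. Since the identity component of $SL(2,\mathbb{C})$ is itself, which is non-abelian, the Morales-Ramis theorem then forbids the $n$ meromorphic first integrals in involution that Liouville integrability of a $2$-degree-of-freedom geodesic flow would require, giving the theorem. Concretely this means ruling out each of Kovacic's cases 1--3.

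The first step is to make $r(z)$ completely explicit. On the equator the data needed are $\tilde r=1+\vep\cos(n\phi)$, $\tilde r_{,\theta}=0$, $\tilde r_{,\phi}=-\vep n\sin(n\phi)$ and $\tilde r_{,\theta\theta}=-\vep n\cos(n\phi)$, together with $\tilde g_{\theta\phi}=0$ and $\tilde g_{\phi\phi}=(1+\vep\cos n\phi)^2+\vep^2n^2\sin^2 n\phi$; from these one assembles the two coefficients $\tilde\Gamma^{\theta}_{\theta\phi}$ and $\tilde\Gamma^{\theta}_{\phi\phi,\theta}$ appearing in \eqref{nve1}. Performing the substitution $z=\vep\cos(n\phi)$ (so $\cos n\phi\to z/\vep$ and $\sin^2 n\phi\to 1-z^2/\vep^2$) rationalises everything; in particular $\tilde g_{\phi\phi}$ becomes the quadratic $(1+z)^2+n^2(\vep^2-z^2)=(1-n^2)z^2+2z+(1+n^2\vep^2)$, which for $n>1$ has two finite roots. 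Forming $r=-q+\tfrac14 p^2+\tfrac12 p'$ then yields a rational function whose poles I expect to be the two turning points $z=\pm\vep$ of the coordinate change, the two roots of $\tilde g_{\phi\phi}$, and $z=\infty$. A useful consistency check is the excluded value $n=1$: there $\tilde g_{\phi\phi}$ degenerates to a linear function and the surface is the integrable lima\c{c}on of revolution, so the very same machinery must \emph{produce} a closed-form solution, confirming the reduction has no errors.

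The second step is the algorithm proper. I would verify the equation is Fuchsian by checking that each pole is a regular singular point, put $r$ in the partial-fraction shape \eqref{pfe}, and read off the residues $\beta_j,\delta_j,\beta_\infty$ and indicial exponents. At the turning points $z=\pm\vep$ the exponents differ only by a half-integer (they arise from the square-root branch of $z=\vep\cos n\phi$ about a regular point of the $\phi$-equation, giving $\sqrt{1+4\beta}=\tfrac12$), so the informative data lie at $z=\infty$ and at the two roots of $\tilde g_{\phi\phi}$, whose residues depend nontrivially on $n$ and $\vep$. The decisive obstruction should be the arithmetic of $\sqrt{1+4\beta_j}$: whenever one of these radicals is irrational the Case 2 sets $E_j=\{2+e\sqrt{1+4\beta_j}\}\cap\mathbb{Z}$ collapse to even integers only, violating the ``not all even'' requirement and killing Case 2, and likewise the Case 3 sets $F_j$ collapse, killing Case 3; the reducible Case 1 is then dispatched by checking the few surviving values of $d$ and showing the associated monic-polynomial equation has no solution.

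The hard part will be carrying this out \emph{uniformly in the parameters} $n>1$ and $\vep\in(0,1)$, since each surface is a distinct Hamiltonian system and must be treated on its own terms rather than by deforming from one value. The indicial exponents, and hence the radicals above, are functions of $n$ and $\vep$, so I must establish that at least one singular point always supplies an irrational radical, and that the Case 1 degree/polynomial conditions always fail, rather than relying on a single convenient value. The awkward residual issue is the possibly discrete set of exceptional $(n,\vep)$ at which a radical happens to become rational; there the collapse argument breaks and the surviving Case 1--3 candidates would have to be excluded directly by testing for the monic polynomials prescribed by the algorithm. Once cases 1--3 are eliminated for all admissible $(n,\vep)$, the conclusion $\mathcal{G}=SL(2,\mathbb{C})$, and with it non-integrability, is immediate.
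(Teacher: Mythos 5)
Your overall framework (linearize about the equatorial planar geodesic, algebrize via $z=\vep\cos(n\phi)$, run Kovacic on the resulting Fuchsian equation, invoke Morales--Ramis) matches the paper's, and your identification of poles at $z=\pm\vep$ and at the two roots $\rho_\pm$ of $\wt{g}_{\phi\phi}=(1-n^2)z^2+2z+(1+n^2\vep^2)$ is correct. You do, however, miss a fifth finite regular singular point at $z=-1$, coming from the factor $\wt{r}=1+z$ in the denominators of the Christoffel symbols; the NVE for $n>1$ has six regular singular points, not five.

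The genuine gap is in your proposed mechanism for eliminating Kovacic's Cases 2 and 3. You bet on at least one $\sqrt{1+4\beta_j}$ being irrational so that the sets $E_j$ and $F_j$ collapse to even integers; in fact the residues work out to $\beta_j=\{0,-\tfrac{3}{16},-\tfrac{3}{16},\tfrac{5}{16},\tfrac{5}{16}\}$ and $\beta_\infty=(n+1)/n^2$, independent of $\vep$, so that $\sqrt{1+4\beta_j}\in\{1,\tfrac12,\tfrac32\}$ and $\sqrt{1+4\beta_\infty}=(n+2)/n$ are rational for \emph{every} $n$ and every $\vep$. None of Kovacic's necessary conditions fail, and what you relegate to an ``awkward residual issue'' for exceptional parameters is in fact the entire content of the proof. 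The paper proceeds by enumerating, for each $n$, all exponent combinations giving $d\in\mathbb{N}_0$: the $n$-dependence of $\beta_\infty$ means there are none at all for $n=7,8,9,11$ and $n>12$, which disposes of those cases immediately, while for $n=2,3,4,5,6,10,12$ a large computer-assisted search shows that no monic polynomial $P$ of the required degree satisfies the Case 1/2/3 recurrences, forcing $\mathcal{G}=SL(2,\mathbb{C})$. On the positive side, your worry about uniformity in $\vep$ largely evaporates, since the $\beta$'s are $\vep$-independent and the candidate degrees $d$ depend only on $n$; and your $n=1$ consistency check is sound and is exactly the paper's Theorem 2.
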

\begin{proof}
When $n>1$, the NVE has 6 regular singular points: five finite
regular singular points and one at infinity. The finite regular
singular points are \be
\{a_1,\ldots,a_5\}=\{-1,\vep,-\vep,\rho_+,\rho_-\} \label{app1}
\ee with \be \rho_{+/-}=\frac{1\pm
n\sqrt{1+\vep^2(n^2-1)}}{n^2-1}. \label{app2}\ee The case $n=1$ is
special, and shall be dealt with below. When $n>1$ and $0<\vep<1$
all singular points are real and distinct. The coefficients in the
partial fraction expansion \eqref{pfe} are \be
\beta_i=\left\{0,\frac{-3}{16},\frac{-3}{16},\frac{5}{16},\frac{5}{16}
\right\}, \quad \beta_\infty=\frac{n+1}{n^2}, \ee and the
$\delta_i$'s are given in Appendix B; here we need only mention
that $\sum \delta_i=0$.

We must now check all three cases of the Kovacic algorithm to see
is the NVE solvable. Often in the literature some of the cases can
be ruled out straight away, based on the ``necessary conditions''
given in Section 2.1 of Kovacic \cite{kovacicmain}. However all of
these necessary conditions are satisfied in the present case since
the order of each pole of $r(z)$ is 2 and moreover \be
\sqrt{1+4\beta_i}=\Big\{1,\frac{1}{2},\frac{1}{2},\frac{3}{2},\frac{3}{2}
\Big\}\in\mathbb{Q},\quad\sqrt{1+4\beta_\infty}=\frac{n+2}{n}\in\mathbb{Q};
\ee therefore we must examine each case.

The first stage in each case consists of finding all combinations
of the $\beta$ coefficients such that $d$ is a non-negative
integer. Unfortunately, there are a {\it lot} of such
combinations. We show the combinations in Table 1, where $N=1$ and
$N=2$ represent cases 1 and 2 respectively, and $N=4,6,12$
represent case 3. The values of $n$, the specific sectoral
harmonic under consideration, are shown in the column on the left.
Each entry in the table gives the possible values of $d$ in bold,
and in brackets the number of combinations of indicial exponents
which lead to this value of $d$. As can be seen immediately from
the table, when $n=7,8,9,11$ and $n>12$ there are no combinations
leading to $d\in\mathbb{N}_0$; as such the NVE in these cases is
not solvable.

For the other values of $n$ we must try to construct a monic
polynomial of degree $d$ which solves the required equations.
Clearly there are too many to go through individually; for the
sake of demonstration we will describe the most involved case,
$n=2, N=12$. The relevant sets defined in \eqref{fsets} are \be
&F_1=\{12\},\quad F_2=F_3=\{6,7,5,8,4,9,3\},\quad
F_4=F_5=\{6,9,3,12,0,15,-3\}, \nonumber \\
&F_\infty =\{6,8,4,10,2,12,0,14,-2,16,-4,18,-6\} \ee  It is easy
to see how there can be many combinations leading to
$d\in\mathbb{N}_0$. The largest value comes from the combination
\[ d=18-(12+3+3-3-3)=6. \]
In this case, we let $P=z^6+p_5z^5+\ldots+p_0$, and define
\begin{align*}
P_{12}&=-P, \\
P_{11}&=-S P_{12}'-S\theta P_{12},\\
P_{10}&=-S P_{11}'+[S'-S\theta]P_{11}-12 S^2 r P_{12},\\
P_{9}&=\ldots \end{align*} all the way down to $P_{-1}$. These
computations quickly become very large and cumbersome. A useful
note is that each $P_i$ is simply a polynomial in $z$, albeit one
with very complicated coefficients. The order increases in a
regular fashion: since $S=O(z^k)$, $S\theta=O(z^{k-1})$ and $S^2
r=O(z^{2(k-1)})$ where $k$ is the number of finite regular
singular points, we see that $P_{N}=O(z^d), P_{N-1}=O(z^{k-1+d})$
and so on until $P_{-1}=O(z^{(N+1)(k-1)+d})$. In the present case
this means $P_{-1}=O(z^{58})$, although the leading coefficient
always vanishes. Another useful point is that we only need the
first few coefficients in $P_{-1}$; setting the first equal to
zero gives a value for $p_5$, the second gives a value for $p_4$
and so on until we reach a term which cannot be set equal to zero.
Then we know that $P_{-1}\not\equiv 0$, which means that this
combination for $d$ does not produce a solution.

This procedure has been followed for all combinations listed in
Table 1 (using Mathematica). For each and every case, it is not
possible to construct the required polynomial $P$. As such, the
NVE arising from linearising about an equatorial geodesic on the
sectoral harmonic surfaces with $n>1$ does not have a differential
Galois group belonging to cases 1,2 or 3 in Kovacic's theorem;
instead, $\mathcal{G}=SL(2,\mathbb{C})$ whose identity component
is not abelian. Therefore according to the Morales-Ramis theorem
the geodesic equations are not integrable. This ends the proof.
\end{proof}

\begin{center}
\begin{table}
\begin{tabular}{|c||l|l|l|l|l|}
\hline $n$ & $N=1$ & $N=2$ & $N=4$ & $N=6$ & $N=12$ \\
\hline \multirow{2}{*}{2} & \multirow{2}{*}{{\bf 0}(4)} &
\multirow{2}{*}{{\bf 0}(3), {\bf 1}(1)} & {\bf 0}(4), {\bf 1}(2),
& {\bf 0}(21), {\bf 1}(10), & {\bf 0}(31),
 {\bf 1}(20), {\bf 2}(13), \\
 & & & {\bf 2}(1) & {\bf 2}(3), {\bf 3}(1) & {\bf 3}(8), {\bf 4}(4), {\bf 5}(2), {\bf 6}(1) \\ \hline
\multirow{2}{*}{3} & \multirow{2}{*}{-} & \multirow{2}{*}{-} &
\multirow{2}{*}{-} & \multirow{2}{*}{{\bf 0}(9), {\bf 1}(3), {\bf
2}(1)}
& {\bf 0}(9), {\bf 1}(6), {\bf 2}(3), \\
 & & & & & {\bf 3}(2), {\bf 4}(1) \\ \hline
\multirow{2}{*}{4} & \multirow{2}{*}{-} & \multirow{2}{*}{{\bf
0}(2)} & \multirow{2}{*}{{\bf 0}(2), {\bf 1}(1)} &
\multirow{2}{*}{{\bf 0}(7),
 {\bf 1}(2)} & {\bf 0}(7), {\bf 1}(3), {\bf 2}(2), \\
& & & & & {\bf 3}(1) \\ \hline 5 & - & - & - & - & {\bf 0}(2), {\bf 1}(1) \\
\hline 6 & - & - & - & {\bf 0}(3), {\bf 1}(1) & {\bf 0}(3), {\bf 1}(2), {\bf 2}(1) \\
\hline
7 & - & - & - & - & - \\
8 & - & - & - & - & - \\
9 & - & - & - & - & - \\ \hline 10 & - & - & - & - & {\bf 0}(1) \\
\hline 11 & - & - & - & - & - \\ \hline
12 & - & - & - & {\bf 0}(2) & {\bf 0}(2), {\bf 1}(1) \\
\hline
\end{tabular}
\caption{The possible values of the non-negative integer $d$; see
text for an explanation.}
\end{table}
\end{center}
\vspace{-1cm}

We can now deal with the special case $n=1$:

\begin{thm}
The equatorial NVE on the $n=1$ sectoral harmonic surface is
solvable.
\end{thm}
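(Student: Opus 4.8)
The plan is to exploit the fact, established earlier, that the $n=1$ sectoral surface is a surface of revolution (the revolved lima\c{c}on) whose geodesic flow is Clairaut-integrable; this both guarantees the result and tells us \emph{which} Kovacic case to expect. Concretely, I would repeat the algebrization carried out in the proof of Theorem 1, but specialised to $n=1$ from the outset, because the general formulae degenerate there. The crucial simplification is that on the equator $\wt{g}_{\phi\phi}=(1+\vep\cos\phi)^2+\vep^2\sin^2\phi=1+\vep^2+2z$ with $z=\vep\cos\phi$, which is \emph{linear} in $z$, whereas for $n>1$ the analogous quantity $(1-n^2)z^2+2z+(1+n^2\vep^2)$ is quadratic with roots $\rho_{\pm}$. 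Thus one of the two singular points $\rho_{\pm}$ escapes to infinity as $n\to 1$, and the $n=1$ NVE has only four finite regular singular points rather than five.

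First I would recompute the singular structure directly for $n=1$. The finite singularities arise from $\wt{r}=1+z$ (giving $z=-1$), from the branch factor $\sin\phi\propto\sqrt{\vep^2-z^2}$ (giving $z=\pm\vep$), and from the single zero of $\wt{g}_{\phi\phi}$ at $z=-(1+\vep^2)/2$; the point at infinity remains regular singular. I would then write $r(z)$ in the partial-fraction form \eqref{pfe} and read off the exponents. The decisive observation is that $\beta_{\infty}=(n+1)/n^2=2$ at $n=1$, so $\sqrt{1+4\beta_{\infty}}=3\in\mathbb{Z}$, and the finite exponents are likewise rational; this is exactly the situation in which Case 1 (reducibility) has a chance of succeeding.

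Next I would run Case 1 of the Kovacic algorithm: form the modified indicial exponents, enumerate the sign choices for which $d=\alpha_{\infty}^{\pm}-\sum_j\alpha_j^{\pm}$ is a non-negative integer, and for each admissible $d$ attempt to build the monic polynomial $P$ of degree $d$ solving $P''+2\theta P'+(\theta'+\theta^2-r)P=0$ with $\theta=\sum_j\alpha_j^{\pm}/(z-a_j)$. Producing one such $P$ yields the Liouvillian solution $\xi=e^{\int\omega}$, $\omega=\theta+P'/P$, which establishes solvability. Should Case 1 fail for every admissible $d$, I would fall back to Case 2; one of the two must succeed, because the surface-of-revolution integrability forces the identity component of the differential Galois group of \eqref{DE} to be abelian by the Morales--Ramis theorem, ruling out Case 4.

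The main obstacle is not conceptual but the bookkeeping of the degeneration: the partial-fraction data $\{\beta_j\}=\{0,-3/16,-3/16,5/16,5/16\}$ quoted for $n>1$ was derived for the five-point configuration and does not simply specialise, so $r(z)$ must be assembled afresh for $n=1$, with care taken that $z=-1$ and $z=-(1+\vep^2)/2$ are genuine second-order poles and that the exponent count at infinity remains consistent with $\sum\delta_j=0$. Once $r(z)$ is correctly in hand the Case 1 construction is short, and the resulting explicit solution serves as the promised independent check that the algorithm correctly separates the integrable case $n=1$ from the non-integrable cases $n>1$ of Theorem 1.
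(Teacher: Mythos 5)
Your plan is essentially the paper's proof: recompute the $n=1$ NVE from scratch, observe that one of the roots $\rho_{\pm}$ escapes to infinity as $n\to 1$ so that only four finite regular singular points remain (at $z=-1,\ \pm\vep,\ -(1+\vep^2)/2$, exactly as in the paper), then run Case 1 of the Kovacic algorithm to exhibit a Liouvillian solution; the paper finds the combination $d=\tfrac{9}{4}-\bigl(1+\tfrac{3}{4}+\tfrac{3}{4}-\tfrac{1}{4}\bigr)=0$, so no polynomial needs constructing ($\theta$ itself solves the Riccati equation $\theta'+\theta^2-r=0$) and the explicit solution is $\xi_1=C_1(z+1)(z^2-\vep^2)^{3/4}(z-\rho)^{-1/4}$. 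One concrete slip: your ``decisive observation'' that $\beta_\infty=(n+1)/n^2=2$ at $n=1$ contradicts your own (correct) warning that the five-point data do not simply specialise. Because $\rho_+$ merges with the point at infinity, the correct value is $\beta_\infty=\tfrac{45}{16}$, so $\sqrt{1+4\beta_\infty}=\tfrac{7}{2}$ and $\alpha_\infty^{\pm}=(\tfrac{9}{4},-\tfrac{5}{4})$, not $3$ and $(2,-1)$; with your value every Case 1 combination gives $d$ equal to an integer minus an odd multiple of $\tfrac{1}{4}$, so no admissible $d$ would be found and the argument would stall at your fallback. That fallback (Clairaut integrability of the surface of revolution forces an abelian identity component by Morales--Ramis, hence one of Cases 1--3 must hold) is logically sound and is a safety net the paper does not need, since Case 1 succeeds directly once $r(z)$ is assembled with the correct data.
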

\begin{proof}
When $n=1$ the equatorial NVE has five regular singular points,
four finite at $a_j=\{-1,\vep,-\vep,\rho\}$ with
$\rho=-\tfrac{1}{2}(1+\vep^2)$, and one at infinity. We calculate
\be \beta_j=\Big\{0,\frac{-3}{16},\frac{-3}{16},\frac{5}{16}
\Big\},\quad \beta_\infty=\frac{45}{16} \ee and hence \be
\alpha_j^\pm=\Big\{
(1,1),\big(\tfrac{3}{4},\tfrac{1}{4}\big),\big(\tfrac{3}{4},\tfrac{1}{4}\big),\big(\tfrac{5}{4},-\tfrac{1}{4}\big),
\Big\},\quad
\alpha_\infty^\pm=\big(\tfrac{9}{4},-\tfrac{5}{4}\big). \ee There
is a combination giving \be
d=\frac{9}{4}-\Big(1+\frac{3}{4}+\frac{3}{4}-\frac{1}{4}\Big)=0
\ee and since $\theta$ thus defined solves the Riccatti equation
$\theta'+\theta^2-r=0$ we have the pair of Louivillian solutions
\be \xi_1=C_1(z+1)(z^2-\vep^2)^{3/4}(z-\rho)^{-1/4}, \quad
\xi_2=C_2 \xi_1 \int \xi_1^{-2}dz. \ee This is of course entirely
as expected since the $n=1$ sectoral surface is a surface of
revolution and thus has integrable geodesic flow.

\end{proof}

%%%%%%%%%%%%%%%%%%%%%%%%%%%%%%%%%%%%%%%%%%%%%%%%%%%%%%%%%%%%%%%%%%%%%%%%%%
%%%%%%%%%%%%%%%%%%%%%%%%%%%%%%%%%%%%%%%%%%%%%%%%%%%%%%%%%%%%%%%%%%%%%%%%%%

\section{Conclusions}

The application of Morales-Ramis theory and Poincar\'{e} sections,
two techniques most often seen in the realm of celestial mechanics
and mechanical systems, to study the integrability of geodesic
flow presents many interesting possibilities. By choosing a
specific class of surfaces to analyze we can rigorously prove the
non-integrability of the geodesic equations. It would be of
interest to extend the results of this work to examine further
surfaces; for example the tesseral harmonics mentioned in Section
2 again have (meridianal) planes of symmetry, and the NVE will
have coefficients that contain Legendre polynomials, and in
general, it would be very informative if we could examine
arbitrary surfaces with a plane of symmetry and try and arrive at
some obstructions to integrability. We note that the presence of a
plane of symmetry makes the decoupling of the NVE straightforward,
however Morales Ruiz \cite{morruiz} outlines a procedure in the
absence of an invariant plane. The completeness of the spherical
harmonics means that surfaces of a very broad class can be
decomposed into a set of spherical harmonics, and perhaps by
analyzing these individually we can make some conclusions about
their sum. Care must be taken however: the triaxial ellipsoid can
be decomposed into a set of (infinite) spherical harmonics, each
of which would likely have non-integrable geodesic flow, and yet
their recombination gives a surface which {\it is} integrable. A
further issue to consider is how the surface in question is
described; the parametric description of the harmonic surfaces led
easily to the calculation of the geodesic equations and in
particular the separation of the normal variational equation. If
the surface were defined in algebraic form, as is more suited to
the double torus for example, this separation may be more
problematic. Finally, this subject provides an interesting and
fruitful combination of techniques from dynamical systems theory,
differential geometry and differential Galois theory.

\section{Acknowledgements}

The author wishes to thank the School of Mathematics, National
University of Ireland, Galway, where some of this work was carried
out; Dr.\ Sergi Simon for useful conversations; and finally the
referees for their important and constructive comments.

%%%%%%%%%%%%%%%%%%%%%%%%%%%%%%%%%%%%%%%%%%%%%%%%%%%%%%%%%%%%%%%%%%%%%%%%%%
%%%%%%%%%%%%%%%%%%%%%%%%%%%%%%%%%%%%%%%%%%%%%%%%%%%%%%%%%%%%%%%%%%%%%%%%%%

\appendix

\section{Meridianal sections}

The analysis above would carry through were we to use one of the
meridianal planes instead of the equatorial plane, with two points
to note:\begin{enumerate}
\item When drawing the Poincar\'{e} sections, the fact that
$\phi=$constant is only a half plane complicates matters. Better
to rotate the coordinate system (or surface) to make the meridian
an equator. For this we need to rotate the surface about the
$x$-axis, and for this we define a new set of coordinates
$\vartheta,\varphi$ by \[ \sin\theta=\sqrt{1-\sin^2\vartheta
\sin^2\varphi},\quad \sin\phi=\frac{\cos\vartheta}{\sin\theta},
\quad \cos\phi=\frac{\sin\vartheta \cos\varphi}{\sin\theta}.
\] Now to write the $n$th sectoral harmonic we expand
$\cos(n\phi)$ and then make the replacement; for example, the 3rd
sectoral harmonic changes as
\begin{align*} \hspace{-0.5cm} \sin^3\theta \cos(3\phi)=\sin^3\theta
(\cos^3\phi-3\cos\phi\sin^2\phi)=\sin^3\vartheta\cos^3\varphi-3\sin\vartheta\cos\varphi\cos^2\vartheta.
\end{align*} An example of a Poincar\'{e} section for this rotated
surface is shown below; the same families of closed geodesics can
be identified.

\begin{figure}[h]
\begin{center}
\includegraphics[width=0.8\textwidth]{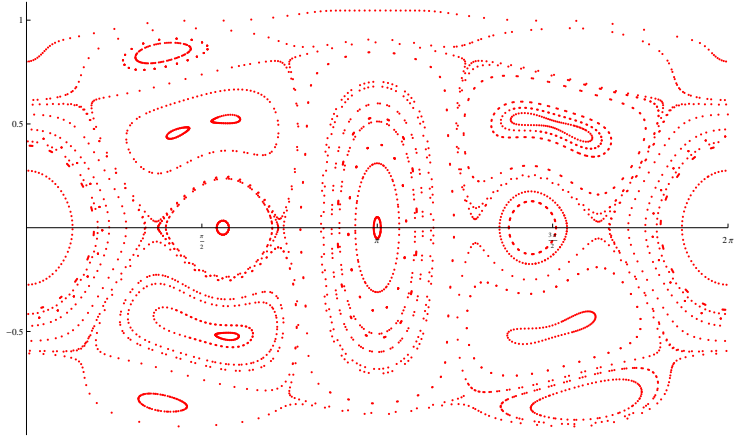}
\caption{Sample Poincar\'{e} section for the rotated $n=3$
sectoral harmonic surface with $\vep=0.1$. The elliptic nature of
the equatorial geodesic (the fixed point at the centre of the
plot) can be clearly seen.}
\end{center}
\end{figure}

\item In deriving the variational
equations we may linearize around a meridianal planar geodesic, to
derive a normal variational equation of the form given in
\eqref{nve2}. In the case of an equatorial planar geodesic the
denominators of the coefficients are at most quadratic in $z$,
however in the case of a meridianal planar geodesic the
denominators are polynomials of degree $2n+2$. While the singular
points are still regular, the number, location and behaviour of
the set of singular points will vary with $n$, complicating
matters.
\end{enumerate}

\section{Coefficients of equation \eqref{pfe}}

\begin{align*}
  \delta_1&=\frac{2}{n(\vep^2-1)}, \\
  \delta_{2/3}&=\pm\frac{8+n^2\pm 2\vep(8+4n+3n^2)+e^2(8+8n+5n^2+8n^3+4n^4)}{16\vep(1\pm\vep)^2 n^2}, \\
  \delta_{4/5}&=-\frac{1}{2n(1-\vep^2)^2}\left( \vep^2(1-2n^2)-1\pm\big[n(1+\vep^2)\sqrt{1+\vep^2(n^2-1)}\,\big]
  \right) \\ &\qquad -\frac{1}{8}\left(\frac{4}{a_{4/5}-a_1}+\frac{1}{a_{4/5}-a_2}+\frac{1}{a_{4/5}-a_3}-\frac{1}{a_{4/5}-a_{5/4}}
  \right),
\end{align*} where the $a_j$ are as defined in
\eqref{app1},\eqref{app2}.

\section{Bibliography}

\bibliographystyle{plain}
\bibliography{geobib}

\end{document}